\theoremstyle{plain}
\newtheorem{theorem}{Theorem}[section]
\newtheorem{proposition}[theorem]{Proposition}
\theoremstyle{definition}
\newtheorem{example}[theorem]{Example}
\newtheorem{definition}[theorem]{Definition}
\newtheorem{remark}[theorem]{Remark}
\newtheorem{algorithm}[theorem]{Algorithm}
\newcommand{\R}{{\mathbb R}}
\newcommand{\N}{{\mathbb N}}
\newcommand{\Z}{{\mathcal Z}}
\newcommand{\X}{{\mathbb X}}
\newcommand{\Xtilde}{{\widetilde{\X}}}
\newcommand{\Xeps}{{\X^{\varepsilon}}}
\newcommand{\I}{{\mathcal I}}
\newcommand{\QB}{{\mathcal O}}
\newcommand{\Mat}{{\rm Mat}}
\newcommand{\rank}{{\rm rank}}
\newcommand{\ebold}{{\bf e}}
\newcommand{\ie}{{\it i.e.}}
\def\cocoa{{\hbox{\rm C\kern-.13em o\kern-.07em C\kern-.13em o\kern-.15em A}}}
\def\Mat{\mathop{\rm Mat}\nolimits}
\def\rank{\mathop{\rm rank}\nolimits}
\def\y1{\mathbf{y}^{(1)}}
\numberwithin{equation}{section}
\title{Simple Varieties for Empirical Points}
\author{ Claudia Fassino$^*$  Maria-Laura Torrente\thanks{Dip. di Matematica, Universit\`a di Genova, via Dodecaneso 35, 16146 Genova, Italy  (\tt{fassino@dima.unige.it}, \tt{torrente@dima.unige.it})}}
\date{}
\begin{document}
\maketitle

\begin{abstract}
\begin{abstract}
We present  a symbolic-numeric approach for the analysis of a given set of noisy data, represented as a finite set~$\X$ of limited precision points.  Starting from~$\X$ and a permitted tolerance~$\varepsilon$ on its coordinates, our method automatically determines a low degree monic polynomial whose associated variety passes close to each point of $\X$ by less than the given tolerance~$\varepsilon$. 
\end{abstract}
\end{abstract}

\noindent
{\bf Keyword:} Empirical points, affine variety, symbolic-numerical algorithms.\\
{\bf MSC[2010]} 68W30, 13P99, 65H10

\section{Introduction}\label{intro}
It is a well-known matter that the analysis and modeling of real-world phenomena 
often relies upon measurements and observations which give rise to sets of 
data represented as affine sets of points.   
An algebraic way that exploits the collected data to construct 
a mathematical model of the observed phenomenon consists in computing the
vanishing ideal of the affine set of points, that is the ideal comprising all 
polynomials which vanish at the given points. The vanishing ideal is classically 
determined using the Buchberger-M\"oller (BM) Algorithm \cite{BM82}, a low complexity method 
which returns a Gr\"obner basis of it.
Nevertheless, the fact that data and relations from the real world are essentially characterized 
by a limited accuracy makes this algebraic approach unfeasible in practice. In fact, even 
small perturbations of the points end up with very different vanishing ideals, 
which may have completely different bases, since the question whether or not 
a polynomial vanishes at a given set of points is obviously very sensitive to
perturbations. For instance, this is a well-known phenomenon when 
using Gr\"obner basis theory (\cite{KR05}, \cite{M99}) which turns out to be unsuitable 
as a numerical tool.
Thus, when dealing with real-world phenomena, the sole use of the classical 
algebraic techniques turns out to be insufficient, and a combination with 
numerical tools is required.

Motivated by the need of describing real-world processes,  
the development of a numerical branch within a discrete mathematical discipline already took place in 
different areas of mathematics, such as Linear Algebra, Differential Equations, Optimization, etc. 
In the nineties, classical nonlinear algebra began to follow this trend, also thanks to the work of H.~J. Stetter, 
whose book~\cite{St} is considered as a stepping stone at the interface of symbolic and numerical computation.
This new emerging discipline has been given different names (see also~\cite{RA}); in~\cite{St06}
its mathematical content has been clearly described as including the ``areas of commutative algebra 
not over arbitrary fields or rings but over the analytically structured fields of the real or complex numbers",
in which the real or complex metric is taken into consideration. 
Obviously, in the attempt of generalizing the notions of classical Commutative Algebra to the 
real or complex coefficients, the greatest attention must be paid regarding the new concepts and
terminology. In particular, in our case, the classical notion of vanishing ideal turns out to be
inadequate for limited precision points, and so it needs to be replaced by an 
appropriate concept also suitable for numerical computations.

In the literature a notion of vanishing ideal of a set of limited 
precision points already exists; in~\cite{AFT08}, \cite{Fa10}, \cite{HKPP09}, \cite{KP11}, \cite{S07}, \cite{T}
it is defined in different ways as an ideal containing polynomials whose evaluations 
at the original points assume small values. 
Note that, according to this definition, the vanishing ideal of a set of 
limited precision points is not necessarily zero-dimensional; further, 
maximum relevance is given to the evaluations of the polynomials 
at the points, while the geometrical distance of the zero-locus 
of the polynomials from the original points is not taken into consideration.
In this paper we follow a different approach: we give more relevance to the distance 
points-variety and introduce the idea that the vanishing ideal of a set of
limited precision points is generated by polynomials of low degree whose affine variety 
lies close to the original points. Among such polynomials, a crucial role is played by those 
of minimal degree, which provide an immediate interpretation of the phenomenon 
encoded by~$\X$ and a simple geometrical 
representation of the data points, as shown in the following example.

\begin{example}\label{mainExample}
Let $\X \subset \R^2$ be a set of points created by perturbing by less than~$0.1$ the coordinates of~$10$ points lying on the affine variety $g=0$ where ${g=y^2-x-2y+2}$:
\begin{equation*}
\begin{array}{rcl}
\X &=& \{(0.95, 1), (1.3, 0.5), (2.05, 1.98), (2.08, 0), (3.18, -0.48),\\
&& (5.05, 2.95),(5.05, -0.95), (7.2, -1.45), (9.98, 4),(10.05, -2)\}
\end{array}
\end{equation*}
Using standard techniques of Computational Algebra we obtain that the 
minimal degree of the  polynomials vanishing at~$\X$ is $4$;
this degree is too high to point out that the set $\X$ lies close to a parabola,
while $g$ and its zero-locus $\Z(g)$, almost crossing each point of $\X$, 
provide a good numerical representation of the given data set.
\end{example}

This paper, which is the first part of a wider investigation, reports on the problem of computing a polynomial $f$ of low degree whose associated affine variety $\Z(f)$, which is defined as the set 
of points at which the polynomial~$f$ vanishes, almost crosses each element of the data set~$\X$. 
The final aim of our future work is to detect a set of polynomials whose minimal 
degree is strictly bounded by the minimal degree of the elements of the vanishing 
ideal of~$\X$ and such that their zero-set is made up of points each differing from 
the corresponding point of~$\X$ by less than the given tolerance.
Put differently, we think to the {\it numerical} vanishing ideal as a truly zero-dimensional
ideal whose {\it exact} zeros are small perturbations of the original limited precision
points, and so numerically indistinguishable from them. 
An evident merit of this point of view  is that 
the gain in simplicity due to the lower degree of the polynomials might offset the noise in the data 
and help to discover simpler laws that rule the phenomenon we aim to describe.

In particular, in this paper we address the following problem: 
given a finite set $\X$ of points and a permitted positive tolerance~$\varepsilon$ on the coordinates of each point, 
we look for a polynomial~$f$ whose degree is strictly bounded by the minimal degree of the elements of the vanishing 
ideal~$\I(\X)$ of~$\X$ and whose affine variety~$\Z(f)$ lies close to the points of~$\X$ by less than~$\varepsilon$.
We present a new algorithm (the Low-degree Polynomial Algorithm - LPA) that, starting from 
$\X$ and $\varepsilon$, uses some peculiarities of the SOI algorithm \cite{AFT08} and NBM algorithm~\cite{Fa10}, both theoretically based on the BM algorithm, to determine a polynomial $f$ of suitable degree.
In most cases $f$ also satisfies some simple conditions,  derived from Kantorovich theorem~\cite{WW},  under which the variety~$\Z(f)$ is proved to lie close to the points of~$\X$ by less than $\varepsilon$. In these favourable cases $f$ is a solution of our problem.

More precisely, partly parallelizing the procedures presented in~\cite{AFT08} and~\cite{Fa10}, we reformulate the addressed  problem as the problem of solving  an ordered finite sequence $\{F_i = 0\}_i$ of nonlinear systems subject to constraints.
The first  nonlinear system that admits an approximated solution satisfying the constraints allows the direct computation of the polynomial $f$.
In the LPA we solve each nonlinear system $F_i=0$ using an iterative method which is a modified version of  the classical Normal Flow algorithm \cite{WW}. Each iteration requires the solution of a linear system whose coefficient matrix is the evaluation of the Jacobian $J_{F_i}$ at a suitable point. Since an analytic expression of $F_i$ as well as of $J_{F_i}$ is computationally very hard to obtain, the evaluation of $J_{F_i}$ is performed using a  new method that does not require the explicit knowledge of $F_i$. Due to the technicalities of this method, its details are separately  illustrated 
inside the Appendix. 
Moreover, since the evaluation of the Jacobian $J_{F_i}$  often turns out to be an ill-conditioned matrix, the corresponding linear system is solved using a method based on the Rank Revealing decomposition \cite{HP92}.

This paper is organized as follows. 
In order to be self-consistent and to fix notations, in Section~\ref{Definitions} we introduce some basic  concepts of Computational Commutative Algebra and Numerical Analysis.
In Section~\ref{underdetNonlinearSystem}  we introduce the modified version of the Normal Flow Algorithm to find an approximated solution of an underdetermined nonlinear system subject to constraints, paying particular attention to the ill-conditioned case.
Section~\ref{mainResult} presents the main results of the paper: the Low-degree Polynomial Algorithm, which performs the main steps of the method described above, and 
Theorem~\ref{theoremBasedOnKantorovich}, which proves that, under certain hypotheses, the output of the LPA is a solution of our theoretical problem.
In Section~\ref{examples} we give numerical examples illustrating the functioning of the LPA.
Finally,~\ref{appendix} contains the basic result for the evaluation of the Jacobian matrix associated with our nonlinear function, without passing through the explicit expression of it.

\goodbreak
\section{Preliminary definitions and results}\label{Definitions}
We recall some basic notation about matrices. 
Let $m,n \ge 1$ be integers and $\Mat_{m \times n}(\R)$ be the set of $m \times n$ matrices with entries in~$\R$; if $m=n$ we simply write $\Mat_n(\R)$.
Let $A\in \Mat_{m \times n}(\R)$; we denote by~$\|A\|_p$ the $p$-norm of $A$, $p=1,2,\infty$.

We divide the rest of this section into three  subsections: the first one contains some preliminary definitions about the polynomial ring $\R[x_1,\ldots,x_n]$ and the notion of empirical points; the second one reports some basic results on the numerical rank of a matrix; the third one describes a classical numerical approach to solve underdetermined nonlinear systems.

\subsection{Preliminary definitions}
We recall some concepts related to the polynomial ring $P=\R[x_1,\ldots,x_n]$ (\cite{KR00},~\cite{KR05}).

\begin{definition}\label{eval}
Let $\X= \{p_1,\dots,p_s\}$ be a non-empty finite set of points of $\R^n$, $f$ be a polynomial and $G=\{g_1,\dots,g_k\}$ be a non-empty finite set of polynomials.
\begin{enumerate}
\item $\I(\X)=\{f \in P : f(p_i)=0 \; \forall p_i \in \X\}$ is the {\bf vanishing ideal} of~$\X$.
\item $\Z(f)=\{p \in \R^n : f(p)=0\}$ is the {\bf affine variety} associated with~$f$.
\item The $\R$-linear map ${\rm{eval}}_{\X} : P \rightarrow \R^s$
defined by ${\rm{eval}}_{\X}(f) = (f(p_1),\dots,f(p_s))$
is called the {\bf evaluation map} associated with $\X$.
For brevity, we write $f(\X)$ to mean ${\rm eval}_{\X}(f)$.
\item The {\bf evaluation matrix} of $G$ associated with $\X$, denoted by
$M_{G}(\mathbb X) \in \Mat_{s\times k}(\R)$, is defined as having entry $(i,j)$
equal to $g_j(p_i)$, \ie~whose columns are the images of the polynomials
$g_j$ under the evaluation map.
\end{enumerate}
\end{definition}
Let  $t=x_1^{\beta_1}\dots x_n^{\beta_n}$, $\beta_i \in \N$, be a power product  and $\QB$ be a set of power products;  we denote by $\partial_k t =\beta_k x_1^{\beta_1}\dots x_k^{\beta_k-1}\dots x_n^{\beta_n}$ the {\bf $k$-th formal partial derivative} of $t$ and by $\partial_k \QB = \{\partial_k t \::\: t \in \QB\}$.

\smallskip
We formalize the idea of perturbed point by introducing a simplified version of the notions of empirical point, admissible perturbation and almost vanishing polynomials (see~\cite{AFT07}, \cite{AFT08}, \cite{Fa10}, \cite{St}).
To this aim we recall that given a real value $\eta \ll 1$ and a real function~$w(x)$ we say that $w(x)=O(\eta^k)$, $k \in \N$, if $|w(x)|/\eta^k$ is bounded near the origin.

\goodbreak
\begin{definition}\label{empirPts}
Let $p,q \in \R^n$ be points and $\varepsilon >0$.
\begin{enumerate}
\item The pair $(p,\varepsilon)=p^\varepsilon$ is called an {\bf empirical point}:~$p$ is the {\bf specified value} and~$\varepsilon$ is the {\bf tolerance} of~$p^\varepsilon$.
\item The {\bf $\varepsilon$-neighborhood} of~$p^\varepsilon$ is defined as ${N(p^\varepsilon) = \{\widetilde{p} \in \R^n :\|\widetilde{p}-p\|_\infty \le \varepsilon\}}$
and contains all the {\bf admissible perturbations} of~$p^\varepsilon$.
\item  Two empirical points $p^\varepsilon$, $q^\varepsilon$ are said to be {\bf distinct} if $ N(p^\varepsilon)  \cap N(q^\varepsilon)  = \emptyset $.
\end{enumerate}
\end{definition}

\begin{definition}\label{appfun}
Let $\Xeps=\{p_1^\varepsilon,\ldots, p_s^\varepsilon\}$ be a set of empirical points with uniform tolerance~$\varepsilon$ and with $\X=\{p_1,\ldots,p_s\} \subset \R^n$
\begin{enumerate}
\item   A set of points $\Xtilde=\{\widetilde{p_1},\ldots,\widetilde{p_s}\} \subset \R^n$ is called an {\bf admissible perturbation} of~$\Xeps$ if each~$\widetilde{p_i} \in N(p_i^\varepsilon)$.
\item  A polynomial $g$  is called {\bf almost vanishing} at $\Xeps$ if ${\| g(\X)\|_2 / \|g\| = O(\varepsilon)}$, where $\|g\|$ is the $2$-norm of its coefficient vector.
\end{enumerate}
\end{definition}

\subsection{The numerical rank}\label{num_r}
In this section we recall the definition of the numerical rank of a matrix $A \in \Mat_{s \times t}(\R)$, and some basic results related to this topic (\cite{GVL96}, \cite{HP92}). 
In the case $s \ge t$ such results will allow us to detect  a partitioning of the columns 
of $A$ (if $s < t$ the partitioning involves the rows of $A$) into two  submatrices $A_1$ and $A_2$ ($A_2$ eventually the empty matrix) such that $A_1$ is  well-conditioned and  $A_2$ consists of columns  (or rows) which are ``almost" depending on the columns (or rows) of $A_1$.

Let $A \in \Mat_{s \times t}(\R)$; we denote by $\rank(A)$ the rank of~$A$, by $\sigma_i(A)$ the $i$-th singular value of~$A$, by $A^\dagger$ the pseudoinverse of~$A$ and by $K_2(A)=\|A\|_2 \|A^{-1}\|_2= \sigma_1(A)/\sigma_{\min\{s,t\}}(A)$ the $2$-norm condition number of $A$. We denote by $0$ the vector or the matrix whose elements are equal to $0$ and whose dimension is deducible from the context.

Given $k>1$ and a small threshold $\delta$ we say that $A$ has numerical $(\delta,k)$-rank $r$ if there exists a well-determined gap between $\sigma_r(A)$ and $\sigma_{r+1}(A)$ and if both~$\delta$ and~$k \delta$ lie in this gap. 
This concept can be formalized as follows.
\begin{definition}\label{defNumericalRank}
Let $r \le \min\{ s,t\}$ and $A \in \Mat_{s \times t}(\R)$; let $\delta >0$ and $k > 1$. The {\bf numerical $(\delta,k)$-rank} of $A$, denoted by $\rank_{\delta,k}(A)$, is equal to $r$ iff
$$
\sigma_1 \ge \ldots \ge \sigma_r(A) > k \delta > \delta >  \sigma_{r+1}(A) \ge \ldots \ge \sigma_{\min\{s,t \}}(A)
$$
\end{definition}
The singular values of $A$ might not clearly split into small and large subsets making the determination of the numerical rank somewhat arbitrary. This leads to more complicated methods for estimating $\rank_{\delta,k}(A)$ which involve the LS problem\cite{GVL96}. In the following of the paper we assume that either the matrix $A$ has full numerical rank or there exists a well-determined gap between $\sigma_r(A)$ and $\sigma_{r+1}(A)$.

\begin{remark}\label{cond}
If $A \in \Mat_{s \times t}(\R)$ is a  full numerical $(\delta,k)$-rank matrix then it  is well-conditioned, that is its condition number cannot be too elevated, as
$K_2(A) = \frac{\sigma_1(A)}{\sigma_{\min\{s,t\}}(A)} <  \frac{\sigma_1(A)}{k\delta}$. 
On the contrary, if $\rank_{\delta,k}(A)=r<{\min\{s,t \}}$ then the matrix is ill-conditioned, that is its condition number is elevated, since
$K_2(A)= \frac{\sigma_1(A)}{\sigma_{\min\{s,t \}}(A)} >  \frac{\sigma_1(A)}{\delta}$. 
\end{remark}

The numerical $(\delta,k)$-rank of $A$ points out that there are no matrices with exact rank less than $\rank_{\delta,k}(A)$ which differ from $A$ by less than $\delta$, as  the following proposition shows.

\begin{proposition}\label{consequenceNumRank}
Let $A \in \Mat_{s \times t}(\R)$ with $\rank_{\delta,k}(A)=r$.\\
Then the set $A_\delta=\{M \in \Mat_{s \times t}(\R) \: : \: \|M-A\|_2 < \delta\}$ contains at least one matrix of rank~$r$ but no matrix of rank strictly less than~$r$.
\end{proposition}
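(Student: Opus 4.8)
The plan is to derive both assertions from the theorem on optimal low-rank approximation in the spectral norm (the Eckart--Young--Mirsky theorem, see~\cite{GVL96}), feeding it only the two inequalities $\sigma_r(A) > \delta > \sigma_{r+1}(A)$ that Definition~\ref{defNumericalRank} guarantees once $\rank_{\delta,k}(A)=r$.

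First I would fix a singular value decomposition $A = U\Sigma V^{\top}$ with $\Sigma = {\rm diag}(\sigma_1(A),\ldots,\sigma_{\min\{s,t\}}(A))$, and form the truncation $A_r = U\Sigma_r V^{\top}$ obtained by setting to zero every singular value after the $r$-th. Since $\sigma_r(A) > k\delta > 0$, the matrix $A_r$ has rank exactly $r$, and the standard truncation-error formula gives $\|A - A_r\|_2 = \sigma_{r+1}(A)$. The numerical rank hypothesis yields $\sigma_{r+1}(A) < \delta$, hence $A_r \in A_\delta$, which proves that $A_\delta$ contains a matrix of rank $r$.

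For the second assertion I would argue by contradiction. Suppose some $M \in A_\delta$ satisfies $\rank(M) = \ell$ with $\ell < r$. The lower-bound half of Eckart--Young--Mirsky states that $\|A - B\|_2 \ge \sigma_{\ell+1}(A)$ for every matrix $B$ with $\rank(B) \le \ell$; applying this to $B = M$ and using $\ell + 1 \le r$ together with the monotonicity of the singular values gives $\|A - M\|_2 \ge \sigma_{\ell+1}(A) \ge \sigma_r(A) > \delta$, contradicting $M \in A_\delta$. Thus no matrix of rank strictly less than $r$ can lie in $A_\delta$.

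Since both parts reduce to a single classical optimality statement, there is no genuine obstacle here; the only point requiring care is to invoke the correct direction of the theorem in each half: the attainment of the error $\sigma_{r+1}(A)$ by the specific rank-$r$ truncation for the existence claim, and the lower bound $\sigma_{\ell+1}(A)$ valid for every competitor of rank at most $\ell$ for the exclusion claim. These are then matched against the two inequalities $\sigma_{r+1}(A) < \delta$ and $\sigma_r(A) > \delta$ furnished by Definition~\ref{defNumericalRank}, which complete the argument.
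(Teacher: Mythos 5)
Your proposal is correct and takes essentially the same route as the paper: both halves rest on the Eckart--Young theorem from~\cite{GVL96}, with the existence claim obtained from the rank-$r$ optimal approximant (your explicit SVD truncation $A_r$ is exactly the paper's minimizer $C$, at distance $\sigma_{r+1}(A)<\delta$) and the exclusion claim from the lower bound $\|A-M\|_2 \ge \sigma_{\ell+1}(A) \ge \sigma_r(A) > \delta$, which is the paper's observation that $\sigma_j(A)>\delta$ for all $j\le r$. Your write-up merely makes the truncated SVD and the contradiction argument explicit where the paper leaves them implicit in the minimization formula $\sigma_j(A)=\min\{\|A-B\|_2 : \rank(B)=j-1\}$.
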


\begin{proof}
From $\rank_{\delta,k}(A)=r$ it follows that $\sigma_r(A) > \delta > \sigma_{r+1}(A)$; further, from the Eckart-Young theorem~\cite{GVL96} for each $j=1 \ldots \min\{s,t\}$ we have $\sigma_j(A) = \min \{ \| A-B \|_2 : \rank(B) = j-1\}$.
Let $C \in \Mat_{s \times t}(\R)$ be s.t.~$\rank(C)=r$ and 
$\|A-C\|_2=\min \{ \| A-B \|_2 :  \rank(B) = r\}$; since $\sigma_{r+1}(A) < \delta$ then $C \in A_\delta$.
Further, since for each $j \le r$ we have $\sigma_j(A) > \delta$; it follows that $A_\delta$ contains no matrix of rank strictly less than~$r$.
\end{proof}

The numerical rank of $A$ is preserved under small perturbations, since the following result about the sensitivity of the singular values holds.
\begin{proposition}\label{lemmaSigma}
Let $A, E \in \Mat_{s\times t}(\R)$; then for each $1\le j \le \min\{s,t\}$ we have
$|\sigma_j(A+E)- \sigma_j(A) | \le \sigma_1(E)= \|E\|_2$
\end{proposition}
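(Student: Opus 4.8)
The plan is to prove this classical perturbation bound, a form of Weyl's inequality for singular values, using the variational min-max characterization of the singular values. First I would recall the Courant--Fischer type formula: for $A \in \Mat_{s \times t}(\R)$ and $1 \le j \le \min\{s,t\}$,
$$
\sigma_j(A) \;=\; \max_{\substack{V \subseteq \R^t\\ \dim V = j}}\;\; \min_{\substack{x \in V\\ \|x\|_2 = 1}} \|Ax\|_2.
$$
This identity follows directly from the singular value decomposition $A = U\Sigma W^T$: the maximum is attained by taking $V$ to be the span of the first $j$ right singular vectors (giving exactly $\sigma_j(A)$), while the matching upper bound follows from a dimension-counting argument, intersecting an arbitrary $j$-dimensional subspace $V$ with the span of the last $t-j+1$ right singular vectors, an intersection that is necessarily nonzero and on which $\|Ax\|_2 \le \sigma_j(A)$.

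Next I would fix $j$ and let $V^\ast$ be a $j$-dimensional subspace attaining the maximum for $A$, so that $\|Ax\|_2 \ge \sigma_j(A)$ for every unit vector $x \in V^\ast$. For each such $x$ the triangle inequality yields
$$
\|(A+E)x\|_2 \;\ge\; \|Ax\|_2 - \|Ex\|_2 \;\ge\; \sigma_j(A) - \|E\|_2,
$$
where I used $\|Ex\|_2 \le \|E\|_2 \|x\|_2 = \|E\|_2$. Since this holds uniformly over the unit sphere of $V^\ast$, and since $V^\ast$ is one admissible competitor in the maximization defining $\sigma_j(A+E)$, I obtain
$$
\sigma_j(A+E) \;\ge\; \min_{\substack{x \in V^\ast\\ \|x\|_2 = 1}} \|(A+E)x\|_2 \;\ge\; \sigma_j(A) - \|E\|_2.
$$

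Finally, interchanging the roles of $A$ and $A+E$ (the perturbation is then $-E$, whose $2$-norm is still $\|E\|_2$) gives the reverse inequality $\sigma_j(A) \ge \sigma_j(A+E) - \|E\|_2$. Combining the two bounds produces $|\sigma_j(A+E) - \sigma_j(A)| \le \|E\|_2 = \sigma_1(E)$, as claimed. The only delicate point is setting up the min-max characterization with the correct indexing and selecting the version (maximum over $j$-dimensional subspaces, minimum over the unit sphere) that makes the triangle inequality push in the right direction; once that is in place the estimate is immediate, because $\|E\|_2$ is a constant that factors cleanly out of the minimum over $x$.
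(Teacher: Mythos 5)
Your proof is correct: the Courant--Fischer min-max characterization of singular values, applied to an optimal $j$-dimensional subspace for $A$ via the triangle inequality and then symmetrized by swapping the roles of $A$ and $A+E$, is exactly the classical argument for Weyl's perturbation inequality for singular values, and each step (attainment of the max, the dimension-counting upper bound, the uniform lower bound $\|(A+E)x\|_2 \ge \sigma_j(A) - \|E\|_2$ on the unit sphere of $V^\ast$) is sound. The paper itself gives no argument, deferring entirely to \cite{GVL96}, Corollary~8.3.2; your write-up is essentially the standard proof underlying that citation, so you have simply made the quoted result self-contained rather than taken a different route.
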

\begin{proof} See~\cite{GVL96}, Corollary 8.3.2
\end{proof}

We end this section with  Theorem~\ref{theoremRRQR}~\cite{HP92} which 
has important consequences when applied to the case $r=\rank_{\delta,k}(A)$.         

\begin{theorem}\label{theoremRRQR}
Let $r \le t \le s$ and $A \in \Mat_{s \times t}(\R)$.
There exists a permutation matrix $\Pi \in \Mat_t(\R)$ such that
\begin{equation}\label{RRQR}
A \Pi =\left (Q_1 \;|\: Q_2\right ) \left ( \begin{array} {cc} R_{11} & R_{12} \\ 0 & R_{22} \end{array} \right )
\end{equation}
where $Q=\left (Q_1 \;|\: Q_2\right )$ is orthonormal, $Q_1\in \Mat_{s \times r}(\R)$, $Q_2\in \Mat_{s\times t-r}(\R)$, $R_{11} \in \Mat_r(\R)$ and $R_{22} \in \Mat_{t-r}(\R)$ are upper triangular, and
\begin{equation}\label{RRQRcondition}
\sigma_{\min}(R_{11}) \ge \frac{\sigma_r(A)}{q(t,r)} \quad \textrm{and } \quad
 \|R_{22}\|_2 \le q(t,r) \sigma_{r+1}(A)
\end{equation}
where $q(t,r)=\sqrt{r(t-r) + \min(r,t-r)}$.
\end{theorem}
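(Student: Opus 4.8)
The plan is to reduce the statement to a purely combinatorial column-selection problem through the singular value decomposition, and then to control the two blocks $R_{11}$ and $R_{22}$ by standard singular value inequalities. First I would record that, for \emph{any} permutation $\Pi$, the factorization~\eqref{RRQR} exists simply by performing a QR factorization of the permuted matrix $A\Pi$; the whole content of the theorem is therefore the \emph{choice} of $\Pi$ that forces the estimates~\eqref{RRQRcondition}. Writing $\hat A = A\Pi$ and partitioning its columns as $(\hat A_1 \mid \hat A_2)$ with $\hat A_1 \in \Mat_{s\times r}(\R)$, one checks that $R_{11}$ is the triangular factor of $\hat A_1$, so that $\sigma_{\min}(R_{11})=\sigma_r(\hat A_1)$, while $R_{22}=Q_2^{\,T}\hat A_2$ measures the projection of the remaining columns onto the orthogonal complement of the range of $\hat A_1$. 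Thus the target reads: the $r$ selected columns must have smallest singular value at least $\sigma_r(A)/q(t,r)$, and the non-selected columns must leave a residual of norm at most $q(t,r)\,\sigma_{r+1}(A)$.

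Next I would take the (thin) SVD $A=U\Sigma V^{T}$, split $V=(V_1\mid V_2)$ with $V_1\in\Mat_{t\times r}(\R)$ collecting the first $r$ right singular vectors, and note that the $r\times t$ matrix $V_1^{T}$ has orthonormal rows. Selecting $r$ columns of $A$ is the same as selecting $r$ columns of $V_1^{T}$, i.e.\ an $r\times r$ submatrix $W_1$. The key device is a \emph{maximal-volume} choice: let $\Pi$ bring to the front those $r$ columns for which $|\det W_1|$ is largest. By Cramer's rule this maximality forces every entry of $Z=W_1^{-1}W_2$ (where $W_2$ gathers the remaining columns of $V_1^{T}$) to have absolute value at most $1$. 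Combining this with the row-orthonormality $W_1W_1^{T}+W_2W_2^{T}=I_r$ and the substitution $W_2=W_1Z$ gives $(W_1^{T}W_1)^{-1}=I_r+ZZ^{T}$, whence
\begin{equation*}
\sigma_{\min}(W_1)=\bigl(1+\|Z\|_2^{2}\bigr)^{-1/2}.
\end{equation*}
Since $Z$ is $r\times(t-r)$ with entries bounded by $1$, bounding $\|Z\|_2^{2}$ by a Frobenius/mixed-norm estimate over its $r(t-r)$ entries yields $\sigma_{\min}(W_1)\ge 1/q(t,r)$ of the stated order.

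Finally I would transport these estimates back to $A$. Writing $\hat A_1=U\Sigma\binom{W_1}{\widehat W}$, with $\widehat W$ the corresponding columns of $V_2^{T}$ and $\Sigma_1,\Sigma_2$ the leading and trailing diagonal blocks of $\Sigma$, orthonormality of $U$ gives $\sigma_{\min}(\hat A_1)^2=\lambda_{\min}\!\bigl(W_1^{T}\Sigma_1^{2}W_1+\widehat W^{T}\Sigma_2^{2}\widehat W\bigr)\ge \lambda_{\min}(W_1^{T}\Sigma_1^{2}W_1)\ge \sigma_r(A)^2\,\sigma_{\min}(W_1)^2$, where the first inequality drops a positive semidefinite term; this yields $\sigma_{\min}(R_{11})\ge \sigma_r(A)/q(t,r)$ with no cross term. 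For the companion bound I would write $A=U_1\Sigma_1V_1^{T}+E$ with $\|E\|_2=\sigma_{r+1}(A)$ (the Eckart--Young identity already used in Proposition~\ref{consequenceNumRank}); then $\|R_{22}\|_2=\|Q_2^{T}\hat A_2\|_2\le \|Q_2^{T}U_1\Sigma_1V_1^{T}\Pi\|_2+\sigma_{r+1}(A)$, and the maximal-volume choice is what forces $Q_2$ nearly into the trailing left singular subspace so that the first term is controlled, giving $\|R_{22}\|_2\le q(t,r)\,\sigma_{r+1}(A)$. The hard part will be the combinatorial lemma of the second paragraph: extracting the \emph{sharp} closed form $q(t,r)=\sqrt{r(t-r)+\min(r,t-r)}$ from the entrywise bound on $Z$ is the real technical core, and it is also where the asymmetric treatments of $R_{11}$ and $R_{22}$ must be reconciled so that the \emph{same} constant governs both inequalities in~\eqref{RRQRcondition}.
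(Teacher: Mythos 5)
The paper itself offers no proof of this statement: its ``proof'' is the citation to \cite{HP92}, Theorem~2.2, so your attempt must be judged on its own merits as a reconstruction of the Hong--Pan argument. Your first three paragraphs are sound and follow the standard line: the reduction $\sigma_{\min}(R_{11})=\sigma_{\min}(\hat A_1)$, the maximal-volume selection with Cramer's rule giving $|z_{ij}|\le 1$, the identity $(W_1W_1^{T})^{-1}=I_r+ZZ^{T}$ (you wrote $(W_1^{T}W_1)^{-1}$, a harmless slip since only singular values enter), and the legitimate dropping of the positive semidefinite term $\widehat W^{T}\Sigma_2^{2}\widehat W$. Moreover, the step you defer as ``the real technical core'' is in fact immediate: $\|Z\|_2^{2}\le\|Z\|_F^{2}\le r(t-r)$ gives $\sigma_{\min}(W_1)\ge (1+r(t-r))^{-1/2}\ge 1/q(t,r)$, because $\min(r,t-r)\ge 1$ whenever $r<t$ (and for $r=t$ the blocks $Z$ and $R_{22}$ are empty). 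So the first inequality in~(\ref{RRQRcondition}) is completely proved by your argument, with a constant even slightly sharper than $q(t,r)$.

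The genuine gap is the bound on $\|R_{22}\|_2$. The splitting $\|R_{22}\|_2\le\|Q_2^{T}U_1\Sigma_1V_1^{T}\Pi\|_2+\sigma_{r+1}(A)$ leaves a term containing $\Sigma_1$, a priori of order $\sigma_1(A)$, and the phrase ``the maximal-volume choice forces $Q_2$ nearly into the trailing left singular subspace'' is exactly the assertion that needs proof; nothing in your text transports the entrywise bound on $Z$ to $Q_2$. The missing idea is to exploit $Q_2^{T}\hat A_1=0$: since $(I-Q_1Q_1^{T})\hat A_1=0$ and orthogonal projection is a $2$-norm contraction, $\|R_{22}\|_2\le\|\hat A_2-\hat A_1M\|_2$ for \emph{every} $M\in\Mat_{r\times (t-r)}(\R)$. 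Writing $\widetilde V=\Pi^{T}V$ in blocks with $\widetilde V_{11}=W_1^{T}$, $\widetilde V_{21}=W_2^{T}$, the choice $M=\widetilde V_{11}^{-T}\widetilde V_{21}^{T}$ cancels the $\Sigma_1$ component exactly and leaves $\|R_{22}\|_2\le\sigma_{r+1}(A)\,\|\widetilde V_{22}-\widetilde V_{21}\widetilde V_{11}^{-1}\widetilde V_{12}\|_2$. This Schur complement of the orthogonal matrix $\widetilde V$ equals $\widetilde V_{22}^{-T}$, and from $\widetilde V_{11}\widetilde V_{11}^{T}=I-\widetilde V_{12}\widetilde V_{12}^{T}$ together with $\widetilde V_{22}^{T}\widetilde V_{22}=I-\widetilde V_{12}^{T}\widetilde V_{12}$ one gets $\sigma_{\min}(\widetilde V_{22})=\sigma_{\min}(\widetilde V_{11})$. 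Hence $\|R_{22}\|_2\le\sigma_{r+1}(A)/\sigma_{\min}(W_1)=\sigma_{r+1}(A)\sqrt{1+\|Z\|_2^{2}}\le\sqrt{1+r(t-r)}\,\sigma_{r+1}(A)\le q(t,r)\,\sigma_{r+1}(A)$: the single scalar $1/\sigma_{\min}(W_1)$ governs both inequalities, which is precisely the reconciliation your final sentence asks for but does not supply. This step is not optional: even a correctly patched version of your triangle inequality yields only $(1+\|Z\|_2)\,\sigma_{r+1}(A)\le(1+\sqrt{r(t-r)})\,\sigma_{r+1}(A)$, which already exceeds $q(t,r)\,\sigma_{r+1}(A)$ for $t=4$, $r=2$. (A last small point: the identification of the column space of $Q_1$ with that of $\hat A_1$ requires $\hat A_1$ to have full rank; this holds when $\sigma_r(A)>0$ by your own first bound, and when $\sigma_r(A)=0$ both inequalities are handled trivially by placing a basis of the column space of $A$ among the first $r$ columns, which forces $R_{22}=0$.)
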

\begin{proof} See~\cite{HP92}, Theorem 2.2.
\end{proof}

 Theorem~\ref{theoremRRQR}, when applied to the case  $r=\rank_{\delta,k}(A)$, states the existence of a set of $r$ 
 strongly independent columns of $A$ or, equivalently, of a well-conditioned submatrix $A_1\in \Mat_{s \times r}(\R)$ of $A$. In particular, it proves that there exist a permutation matrix $\Pi$ and a partitioning of $A\Pi=\left( A_1 \;| \; A_2 \right )$ where $ A_1= Q_1R_{11}  \in \Mat_{s \times r}(\R)$ and   $A_2 =  Q_1R_{12} +Q_2 R_{22} \in \Mat_{s \times t-r}(\R) $, which has the following  interesting numerical  properties.
If the  gap between $\sigma_r(A)$ and  $\sigma_{r+1}(A)$ is large enough, that is if 
$\sigma_r(A) \gg \sigma_{r+1}(A)$, then $A_1$ is a submatrix of $A$ consisting  of the  maximum number of strongly independent columns w.r.t.~the threshold $\delta$. 
In fact,  from $\sigma_r(A_1)=\sigma_r(R_{11}) > \frac{\sigma_r(A)}{q(t,r)} >0$, we have that
$A_1$ has full rank,  and from
$Q_1=A_1 R^{-1}_{11}$, $\|R_{22}\|_2 \le  q(t,r) \sigma_{r+1}(A)$ and 
 $A_2 = Q_1R_{12} +Q_2 R_{22}$
we have that each column of the matrix $A_2$  can be expressed as the sum of a linear combination of the columns of $A_1$ and a vector whose $2$-norm is less than $q(t,r)\sigma_{r+1}(A) $.
Furthermore, though $A$ is ill-conditioned (see Remark~\ref{cond}), $A_1$ is well-conditioned, since  
${K_2(A_1) = \frac{\sigma_1(A_1)}{\sigma_r(A_1)} = \frac{\sigma_1(A_1)}{\sigma_r(R_{11})} \le   \frac{\sigma_1(A)}{\sigma_{r}(A)} q(t,r)}$

\subsection{Underdetermined nonlinear systems}\label{Under_sys}
Let $D \subseteq \R^n$, $F: D \rightarrow \R^m$, with $m<n$, be a differentiable nonlinear function, $J_F(x)$ be the Jacobian matrix of~$F$ at~$x$ and let $F=0$ be the underdetermined nonlinear system to solve.
In this subsection we  recall the Normal Flow Algorithm, a classical iterative method for approximating a solution of $F=0$ (see~\cite{WW} and the references given there).
 
\begin{algorithm}{\bf (The Normal Flow Algorithm - NFA)}\label{normalFlowAlg}\\
Let $D \subseteq \R^n$, $F: D \rightarrow \R^m$ be a differentiable nonlinear function, $\omega \ll 1$ be a fixed threshold, and $\bar x \in D$ be the initial point. Consider the following sequence of steps.
\begin{enumerate}
\item[{\bf NF1}] Let $h=(1,\ldots,1)^t$.
\item[{\bf NF2}] While $\|h\|_2 > \omega$ \\
\phantom{aa} $\bullet$ Compute the minimal $2$-norm solution of  $J_F(\bar x) h = -F(\bar x)$.\\
\phantom{aa} $\bullet$ Let $\bar x=\bar x+h$.\\
Return $\bar x$ and stop.
\end{enumerate}
\end{algorithm}

At each iteration the minimal $2$-norm solution of the underdetermined linear system $J_F(\bar x) h = -F(\bar x)$ can be 
either computed using $J_F^\dagger(\bar x)$ or, more efficiently, performing a $QR$ decomposition 
of $J_F(\bar x)$ (see \cite{DH93}).

\smallskip
The next classical result (see~\cite{WW}) is a local convergence theorem for the NFA that also provides sufficient conditions for the existence of a local solution of $F=0$.

\begin{theorem}\label{KantorovichTheorem}{\bf (Kantorovich theorem)}\\
Let $D \subseteq \R^n$ be an open set, $F: D \rightarrow \R^m$ be a differentiable nonlinear function and~$J_F(x)$ be of full rank~$m$ in an open convex set $\Omega \subseteq D$. Let $x_0 \in \Omega$, $\eta > 0$ and $\Omega_\eta = \{x \in \Omega \::\: \|y -x\|_2 < \eta \Rightarrow y \in \Omega\}$; suppose that
\begin{enumerate}
\item[$\bullet$] $\exists\;  \gamma \ge 0$, $p \in (0,1]$ such that $\|J_F(y) - J_F(x)\|_2 \le \gamma \|y-x\|_2^p$, $\forall x,y \in \Omega$
\item[$\bullet$] $ \exists  \; \mu > 0$ such that $\|J_F^\dagger(x)\|_2 \le \mu$, $\forall  x \in \Omega$
\end{enumerate}
Then there exists $\beta >0$ satisfying
$\tau = \frac{\gamma \mu^{1+p}\beta^p}{1+p} < 1$ 
and $\frac{\mu \beta}{1 - \tau} < \eta$
such that if $x_0 \in \Omega_\eta$ and $\|F(x_0)\|_2 < \beta$ then the iterates $\{x_k\}_{k=0,1,\ldots}$ determined by the NFA are well defined and converge to a point $x^* \in \Omega$ such that $F(x^*)=0$.
\end{theorem}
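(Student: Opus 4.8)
The plan is to read the Normal Flow iteration as a pseudoinverse Newton step and to run a single induction that keeps the iterates inside $\Omega$ while forcing the residuals to contract. First I would observe that for every $x \in \Omega$ the hypothesis $\rank(J_F(x)) = m$ makes the system $J_F(x)h = -F(x)$ consistent, with unique minimal $2$-norm solution $h = -J_F^\dagger(x)F(x)$; thus, as long as the current iterate lies in $\Omega$, step {\bf NF2} is well defined and produces $x_{k+1} = x_k + h_k$ with $h_k = -J_F^\dagger(x_k)F(x_k)$ and $\|h_k\|_2 \le \mu\,\|F(x_k)\|_2$. Writing $\phi_k = \|F(x_k)\|_2$, this also gives the step bound $\|x_{k+1}-x_k\|_2 \le \mu\phi_k$.

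The heart of the argument is a residual contraction estimate. Since the H\"older bound on $J_F$ makes $F$ continuously differentiable, and since $J_F(x_k)h_k = -F(x_k)$ holds exactly, I would apply the fundamental theorem of calculus along the segment $[x_k,x_{k+1}]$, which lies in $\Omega$ by convexity once both endpoints do, to obtain
$$F(x_{k+1}) = \int_0^1\bigl(J_F(x_k+th_k)-J_F(x_k)\bigr)h_k\,dt.$$
Taking norms and inserting the estimate $\|J_F(x_k+th_k)-J_F(x_k)\|_2 \le \gamma\,(t\|h_k\|_2)^p$ yields, after integrating $t^p$ over $[0,1]$,
$$\phi_{k+1} \le \frac{\gamma}{1+p}\,\|h_k\|_2^{1+p} \le \frac{\gamma\mu^{1+p}}{1+p}\,\phi_k^{1+p}.$$
Denoting the leading constant by $c$, so that $c\beta^p = \tau$, the inequality $\phi_{k+1} \le c\,\phi_k^{1+p}$ shows that whenever $\phi_k \le \phi_0 < \beta$ one has $\phi_{k+1} \le (c\beta^p)\phi_k = \tau\phi_k$, that is geometric decay with ratio $\tau < 1$.

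I would then close everything by induction on $k$, proving simultaneously that $x_k \in \Omega$ and $\phi_k \le \tau^k\phi_0$. Given these for indices up to $k$, the step bound gives $\|x_{k+1}-x_0\|_2 \le \sum_{j=0}^{k}\|h_j\|_2 \le \mu\beta\sum_{j\ge 0}\tau^j = \mu\beta/(1-\tau) < \eta$, so the defining property of $\Omega_\eta$ places $x_{k+1}$ in $\Omega$; convexity then legitimizes the integral estimate above and delivers $\phi_{k+1} \le \tau^{k+1}\phi_0$. The same telescoping bound shows $\{x_k\}$ is Cauchy, hence convergent to some $x^*$ with $\|x^*-x_0\|_2 \le \mu\beta/(1-\tau) < \eta$, so $x^* \in \Omega$; continuity of $F$ together with $\phi_k \to 0$ forces $F(x^*)=0$. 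Existence of an admissible $\beta$ is immediate, since both $\tau$ and $\mu\beta/(1-\tau)$ tend to $0$ as $\beta \to 0^+$.

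The delicate point, and the part I would take most care over, is precisely this interlocking induction: the domain membership of $x_{k+1}$ depends on the accumulated step bounds, which rest on the residual decay, which in turn is only valid once the segment $[x_k,x_{k+1}]$ is known to lie in $\Omega$. The induction must therefore establish the contraction, then the distance sum, then the next membership, in exactly that order, so that no estimate is invoked before the iterate it concerns has been certified to belong to $\Omega$.
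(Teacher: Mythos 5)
Your proposal is correct. Note, however, that the paper does not actually prove this theorem: its ``proof'' is the single line ``See [WW], Theorem~2.1,'' deferring entirely to Walker and Watson. Your reconstruction is the standard Newton--Kantorovich-type argument for the normal flow iteration, and it is the same in structure as the one in the cited source: the pseudoinverse step bound $\|h_k\|_2 \le \mu\|F(x_k)\|_2$, the residual estimate $\|F(x_{k+1})\|_2 \le \tfrac{\gamma}{1+p}\|h_k\|_2^{1+p}$ obtained from the fundamental theorem of calculus together with $J_F(x_k)h_k = -F(x_k)$, the resulting geometric decay with ratio $\tau = \tfrac{\gamma\mu^{1+p}\beta^p}{1+p}$, and the interlocking induction that certifies $x_{k+1} \in \Omega$ via $\|x_{k+1}-x_0\|_2 \le \mu\beta/(1-\tau) < \eta$ and the defining property of $\Omega_\eta$ before invoking the segment estimate. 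You are also right to flag the ordering of the induction (contraction, then accumulated distance, then membership) as the delicate point; your handling of it, including the base case and the passage to the limit $x^* \in \Omega$ with $F(x^*)=0$ by continuity, is sound, and the existence of an admissible $\beta$ follows as you say since both $\tau$ and $\mu\beta/(1-\tau)$ vanish as $\beta \to 0^+$.
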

\begin{proof} See~\cite{WW}, Theorem~2.1.
\end{proof}

\section{Underdetermined and ill-conditioned nonlinear systems}\label{underdetNonlinearSystem}

As reported in the Introduction, we reformulate  the problem addressed in this paper as the problem of solving an ordered finite sequence of  underdetermined nonlinear systems subject to constraints which, in general,  turn out to be ill-conditioned;  in this section we present a method for solving such kind of  nonlinear systems.  

Let $D \subseteq \R^n$ be a set containing the origin, let $F: D \rightarrow \R^m$ be a nonlinear function of class~$\mathcal C^2(D)$,   and let $Q_\varepsilon=\{x \in \R^n \: : \: \|x\|_\infty \le  \varepsilon\}$; our  aim is to compute a solution of the underdetermined nonlinear system $F=0$ subject to $x \in Q_\varepsilon\cap D$.
As reported in Section~\ref{Under_sys} a classical method for finding an approximated solution of $F=0$ is given by 
the NFA, which returns successively better approximations of a zero of the system $F=0$ by computing at each step the minimal $2$-norm solution of $J_F(\bar x) h = - F(\bar x)$, where $\bar x$ is a suitable point of $\R^n$. Obviously, when  the nonlinear system $F=0$ is ill-conditioned, that is when the 
condition number of the Jacobian matrix $J_F$ of $F$ evaluated in the constrained region is too elevated, 
the NFA could end up with unreliable solutions. In order to overcome  the numerical instabilities in the computations and avoid the unreliable solutions which can occur in the ill-conditioned case, we present an alternative method that firstly replaces $F$ with a new suitable function $\widehat F$ slightly differing from $F$ in the constrained region, that is such that $\|\widehat F(x) - F(x) \|_2 $ is small on $Q_\varepsilon\cap D$, and well-conditioned at the origin (and also in a neighbourough of it). If this latter condition is not met, that is if the Jacobian matrix $J_{\widehat F}(0)$ is ill-conditioned, we prove under a simple additional  hypothesis that the original system $F=0$  subject to constraints has no solution (see Theorem~\ref{noExistenceWellCondCase}).
Successively our method applies a  a modified version of the NFA to the system $\widehat F =0$; the iterations of this modified version are stopped either when the computed deplacement is small enough, as in the classical version, or when the current iteration goes out of the region where $J_{\widehat F}$ is well-conditioned.  
At the end this new algorithm returns either the current iterate $\bar x$ if the given constraints are satisfied, or  a warning message.
This  method, that we present as the Root Finding Algorithm (see Algorithm~\ref{algRF}), turns out to be  {\it backward stable}  \cite{Datta10} by construction, since it tries to solve, instead of the original constrained system, the nearby problem $\widehat F=0$ with the same constraints.

In order to define the new function ${\widehat F: D \rightarrow \R^m}$, we  use the numerical rank of the matrix $(J_F(0) \:|\: F(0)) $ as follows.  Given a threshold $\delta \ge \varepsilon$ of the same order of magnitude as~$\varepsilon$ and an integer $k>1$, 
we detect the numerical $(\delta,k)$-rank of the matrix $(J_F(0)\:|\: F(0) )^t$ by means of the SVD decomposition.  Let $\Pi$ be the permutation matrix whose existence is stated in Theorem~\ref{theoremRRQR} with $r=\rank_{\delta,k}(J_F(0)\:|\: F(0) )^t$, which satisfies

\begin{equation}\label{RRQRofJF}
\left ( \begin{array}{c}J_F(0)^t \\ F(0)^t \end{array} \right) \Pi =
\left (Q_1 \;|\: Q_2\right )\left ( \begin{array} {cc} R_{11} &  R_{12} \\ 0 &  R_{22} \end{array} \right )
\end{equation}
where $Q=\left (Q_1 \;|\: Q_2\right )$ is orthonormal, $Q_1\in \Mat_{n+1 \times r}(\R)$, $Q_2\in \Mat_{n+1 \times m-r}(\R)$, 
$R_{11} \in \Mat_r(\R)$ and $R_{22} \in \Mat_{m-r}(\R)$ are upper triangular.
Further we have $\sigma_{\min}(R_{11}) \ge (k/q(m,r)) \delta$ and $\|R_{22}\|_2 \le q(m,r)\delta$, where $q(m,r)=\sqrt{r(m-r)+\min(r,m-r)}$.
Let$F_1: D  \rightarrow \R^r$
and $F_2: D \rightarrow \R^{m-r}$
be the functions made up of the first~$r$ and the last~$m-r$ components of~$\Pi^tF$.
The function $\widehat F:D \rightarrow \R^m$, defined  as
\begin{equation}\label{HatF}
\widehat F(x) \equiv\left ( \begin{array}{c}
\widehat F_1(x) \\
\widehat F_2(x)
\end{array}\right ) =
\left ( \begin{array}{c}
 F_1(x) \\
G F_1(x) \end{array}\right) \;\;\textrm{with }\;\; G=R_{12}^t  R_{11}^{-t}
\end{equation}
is ``close" to the function $F$, as shown in the following theorem.

\begin{theorem}\label{Differenza_F}
Let $\Delta \subseteq Q_\varepsilon \cap  D$ be a closed convex set containing the origin and let $\widehat F$ be the function defined by~(\ref{HatF}); for each $x \in \Delta$ we have
$$
\| \widehat F(x) -\Pi^t F(x)\|_2  \le q(m,r)\delta + O(\delta^2)
$$
Moreover, if there exists $x^*\in \Delta$ such that $F(x^*)=0$, then
$$
\| \widehat F(x) -\Pi^t F(x)\|_2 = O(\delta^2)
$$
\end{theorem}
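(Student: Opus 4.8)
The plan is first to reduce the claim to a single vector. Since by construction $\widehat F_1 = F_1$, the difference $\widehat F(x)-\Pi^t F(x)$ has a vanishing first block, so that $\|\widehat F(x)-\Pi^t F(x)\|_2 = \|\widehat F_2(x)-F_2(x)\|_2 = \|G F_1(x)-F_2(x)\|_2$. Writing $\phi(x)=G F_1(x)-F_2(x)$, the whole theorem becomes a pair of bounds on $\|\phi\|_2$ over $\Delta$: the estimate $\|\phi(x)\|_2 \le q(m,r)\delta + O(\delta^2)$ in general, and $\|\phi(x)\|_2 = O(\delta^2)$ when a true zero $x^*$ of $F$ lies in $\Delta$.

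The crucial step is to extract from the decomposition~(\ref{RRQRofJF}) that both $\phi$ and its Jacobian are of order $\delta$ at the origin. Put $A=\left(\begin{smallmatrix}J_F(0)^t\\ F(0)^t\end{smallmatrix}\right)$ and $A\Pi=(A_1\mid A_2)$; the columns of $A\Pi$ are the gradients at $0$ of the components of $\Pi^t F$, each augmented by the corresponding value, so $A_1=\left(\begin{smallmatrix}J_{F_1}(0)^t\\ F_1(0)^t\end{smallmatrix}\right)$ and $A_2=\left(\begin{smallmatrix}J_{F_2}(0)^t\\ F_2(0)^t\end{smallmatrix}\right)$. Since $R_{11}$ is invertible ($\sigma_{\min}(R_{11})>0$), the relation $A_1=Q_1R_{11}$ gives $Q_1=A_1R_{11}^{-1}$, whence, using $G^t=R_{11}^{-1}R_{12}$, $A_2-A_1G^t = Q_1R_{12}+Q_2R_{22}-A_1R_{11}^{-1}R_{12}=Q_2R_{22}$. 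Because $\widehat F_2=GF_1$ with $G$ constant, $A_1G^t=\left(\begin{smallmatrix}J_{\widehat F_2}(0)^t\\ \widehat F_2(0)^t\end{smallmatrix}\right)$, so this identity reads $\left(\begin{smallmatrix}(J_{F_2}(0)-J_{\widehat F_2}(0))^t\\ (F_2(0)-\widehat F_2(0))^t\end{smallmatrix}\right)=Q_2R_{22}$. As $Q_2$ is orthonormal, the right-hand side has $2$-norm $\|R_{22}\|_2\le q(m,r)\delta$, and since the $2$-norm of a block of rows never exceeds that of the full matrix, I obtain simultaneously $\|\phi(0)\|_2\le q(m,r)\delta$ and $\|J_\phi(0)\|_2 = \|J_{\widehat F_2}(0)-J_{F_2}(0)\|_2\le q(m,r)\delta$.

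The first bound then follows from a first-order Taylor expansion. As $\Delta$ is closed, convex, bounded (being inside $Q_\varepsilon$) and contains $0$, and $\phi\in\mathcal C^2$, for every $x\in\Delta$ we have $\phi(x)=\phi(0)+J_\phi(0)x+R(x)$ with $\|R(x)\|_2\le C\|x\|_2^2$ for a uniform constant $C$. Using $\|x\|_\infty\le\varepsilon$, hence $\|x\|_2\le\sqrt n\,\varepsilon$, together with $\delta\ge\varepsilon$ of the same order of magnitude (so $\varepsilon=O(\delta)$), the middle term obeys $\|J_\phi(0)x\|_2\le q(m,r)\delta\cdot\sqrt n\,\varepsilon=O(\delta^2)$ and the remainder $\|R(x)\|_2=O(\varepsilon^2)=O(\delta^2)$; combining with $\|\phi(0)\|_2\le q(m,r)\delta$ gives $\|\phi(x)\|_2\le q(m,r)\delta+O(\delta^2)$. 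For the refined estimate I would use that $F(x^*)=0$ forces $F_1(x^*)=F_2(x^*)=0$ and hence $\phi(x^*)=0$; evaluating the same expansion at $x^*$ yields $\phi(0)=-J_\phi(0)x^*-R(x^*)$, so $\|\phi(0)\|_2\le q(m,r)\delta\cdot\sqrt n\,\varepsilon+O(\varepsilon^2)=O(\delta^2)$. Thus the presence of a genuine zero upgrades the control of $\phi(0)$ from $O(\delta)$ to $O(\delta^2)$, and substituting this sharper bound back into the expansion for arbitrary $x\in\Delta$ makes every term $O(\delta^2)$, giving $\|\phi(x)\|_2=O(\delta^2)$.

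The main obstacle is the algebraic bookkeeping of the second paragraph: one must see that the specific choice $G=R_{12}^t R_{11}^{-t}$ is exactly the matrix making $A_1G^t$ coincide with the augmented Jacobian-and-value block of $\widehat F_2=GF_1$, so that the single off-diagonal RRQR term $Q_2R_{22}$ governs both $\phi(0)$ and $J_\phi(0)$ at once. Once this is recognized, the remaining arguments are routine Taylor estimates in which $\varepsilon$ and $\delta$ are treated as comparable infinitesimals.
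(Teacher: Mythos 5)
Your proposal is correct and follows essentially the same route as the paper's own proof: the paper likewise reduces to $d(x)=GF_1(x)-F_2(x)$, uses the rank-revealing decomposition to get $(J_d(0)\,|\,d(0))=-R_{22}^tQ_2^t$ (your transposed identity $A_2-A_1G^t=Q_2R_{22}$), and concludes with the same first-order Taylor expansion treating $\varepsilon=O(\delta)$. The only cosmetic differences are that the paper assumes w.l.o.g.\ $\Pi=I$ and works with the untransposed matrix $(J_F(0)\,|\,F(0))$, whereas you keep $\Pi$ explicit and argue on $(J_F(0)\,|\,F(0))^t$.
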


\begin{proof}
W.l.o.g.~in~(\ref{RRQRofJF}) we may assume $\Pi$ equals to the identity matrix; using~(\ref{RRQRofJF}) and an obvious partitioning of $Q=(Q_1 \;|\; Q_2)$ the matrix $(J_F(0) \:|\: F(0)  )$ can be rewritten as
\begin{equation}\label{RRDec}
\left (J_F(0) \; |\; F(0) \right )=\left ( \begin{array}{cc}
J_{F_1}(0) &F_1(0) \\
J_{F_2}(0)& F_2(0)
\end{array}\right )=\left ( \begin{array}{l}
R^t_{11} Q^t_1 \\
R^t_{12} Q^t_1 + R^t_{22} Q^t_2 \\
\end{array}\right )
\end{equation}
We denote by $d(x)= (\widehat F_2-F_2)(x)=(G F_1 -F_2)(x)$ and so  from~(\ref{HatF}) we have that
$\|\widehat F(x) -F(x) \|_2 =\|\widehat F_2(x) -F_2(x) \|_2=\| d(x)\|_2$.
Further from~(\ref{RRDec})
\begin{equation*}
\begin{array}{lll}
(J_d(0) \;|\; d(0) ) &=& ( GJ_{F_1}(0) -J_{F_2}(0)\;|\; GF_1(0) -F_2(0)) \\
&=& G( J_{F_1}(0) \;|\; F_1(0)) -( J_{F_2}(0)\;|\; F_2(0))  \\
&=& G R_{11}^t Q_1^t - R_{12}^tQ_1^t -R_{22}^t Q_2^t = -R_{22}^t Q_2^t
\end{array}
\end{equation*}
therefore $\|J_d(0)\|_2 \le q(m,r) \delta$  and $\|d(0)) \|_2 \le q(m,r)\delta$.

We consider the Taylor expansion of $d(x)$ at the origin; since $d \in \mathcal C^2(\Delta)$ and  $\Delta \subseteq Q_\varepsilon \cap D$ is a closed and convex set, we have $d(x) = d(0)+J_d(0) x +O(\varepsilon^2)$, for each $x \in\Delta$. Since $\|x\|_2 \le\sqrt{n}\varepsilon$ and $\delta \ge \varepsilon$ we have
\begin{equation*}
\begin{array}{rcl}
\|d(x)\|_2 &\le&  \|d(0)\|_2+\|J_d(0)\|_2 \|x\|_2 + O(\varepsilon^2) \\ 
&\le& \|d(0)\|_2+q(m,r) \sqrt{n}\delta \varepsilon  + O(\varepsilon^2) \\
&\le & \|d(0)\|_2+q(m,r) \sqrt{n}\delta^2  + O(\delta^2) = \|d(0)\|_2+ O(\delta^2)
\end{array}
\end{equation*}
From $\|d(0)) \|_2 \le q(m,r)\delta$ the first inequality follows.

Now, let $x^*\in \Delta$ such that $F(x^*)=0$; evaluating the previous Taylor expansion of~$d(x)$ at~$x^*$ we get $0=d(0)+J_d(0)x^*+O(\varepsilon^2)$, and so $d(0) = -J_d(0)x^*+O(\varepsilon^2)$.
Since $\|x^*\|_2 \le\sqrt{n}\varepsilon$, it follows that
$$
\|d(0)\|_2 \le q(m,r) \sqrt{n}\delta \varepsilon +O(\varepsilon^2) \le  q(m,r) \sqrt{n}\delta^2 +O(\delta^2) = O(\delta^2)
$$
and this concludes the proof.
\end{proof}

From Theorem~\ref{Differenza_F}, the partition given in (\ref{RRQRofJF}) can be used in the Root Finding Algorithm to derive the new function $\widehat F$. 

Since, by construction, the last $m-r$ equations of the nonlinear system $\widehat F=0$ linearly depend on the first  $r$ equations, in order to approximate the solution of $\widehat F =0$ we only have to  consider the nonlinear system $F_1=0$.

From formula~(\ref{RRQRcondition}), since $\left( J_{F_1}(0) \;|\; F_1(0) \right ) = R^t_{11} Q_1^t$  it follows that 
$$\sigma_r\left( (J_{F_1}(0) \;|\; F_1(0) )\right ) = \sigma_{\min}  (R_{11}) > \frac{\sigma_r\left( (J_{F}(0) \;|\; F(0)) \right ) }{q(m,r)}$$
and so $\left( J_{F_1}(0) \;|\; F_1(0) \right )$ is well-conditioned if the  $r$-th singular value of the matrix $\left( J_{F}(0) \;|\; F(0) \right )$ is large enough. 
Nevertheless this fact  does not guarantee that the matrix $ J_{F_1}(0) $ is well conditioned too. For  this reason, 
the Root Finding Algorithm, after the construction  the  function $F_1$, checks whether the numerical rank of $ J_{F_1}(0)$ is equal to $r$.    If $\rank_{\delta,k}\left( J_{F_1}(0) \right)<r$  then  $J_{F_1}(0)$ is ill-conditioned (see Remark~\ref{cond}) and, in order to avoid numerical instabilities in the computations, no iterations are performed. 
Otherwise, that is if $\rank_{\delta,k}\left( J_{F_1}(0) \right)=r$ the matrix $ J_{F_1}(0) $ is well-conditioned; in this case, starting from the zero vector, the Root Finding Algorithm processes  the system $F_1=0$ by mean  the  NFA with an additional check, at each step, on the conditioning of the Jacobian matrix,  in order to  deal  with well conditioned linear systems.  When the loop stops, the algorithm returns the final iterate if it satisfies the constraints or, otherwise, a warning message. 

\begin{algorithm}{\bf (The Root Finding Algorithm -- RFA)}\label{algRF}\\
Let $D \subseteq \R^n$ be a set containing the origin and $F: D \rightarrow \R^m$ with $m<n$ be a  nonlinear function of class~$\mathcal C^2(D)$; let $\omega \ll 1$ be a fixed threshold, $\varepsilon>0$ and
$Q_\varepsilon=\{x \in \R^n \: : \: \|x\|_\infty \le
\varepsilon\}$; let  $\delta \ge \varepsilon$ of the same order of
magnitude as~$\varepsilon$ and $k>1$. 
Consider the following sequence of steps.
\begin{enumerate}
\item[{\bf RF1}] Compute the numerical $(\delta,k)$-rank of $(J_F(0)
\:|\: F(0))^t$ and a permutation matrix $\Pi \in \Mat_m(\R)$ as given
by Theorem~\ref{theoremRRQR} w.r.t.~the numerical rank$r=\rank_{\delta,k}\left ((J_F(0)
\:|\: F(0))^t\right)$. Partition $\Pi$ as $(\Pi_1 \:| \: \Pi_2)$ with
$\Pi_1 \in \Mat_{m \times r}(\R)$; let $F_1=\Pi_1^t F$.

\item[{\bf RF2}] Let $\bar x=0$, $h=(1,\ldots,1)^t$.\\
While ($\|h\|_2 > \omega$) and ($\rank_{\delta,k} (J_{F_1}(\bar x))=r$)\\
\phantom{aa} $\bullet$ Compute the minimal $2$-norm solution $h$ of  $J_{F_1}(\bar x) h = -F_1(\bar x)$.\\
\phantom{aa} $\bullet$ Let $\bar x=\bar x+h$.

\item[{\bf RF3}] If $\bar x \in Q_\varepsilon  \cap D $ return $\bar x$; otherwise return $\bar x=$NULL.

\end{enumerate}
\end{algorithm}

Step~{\bf RF1} requires the determination of the permutation matrix $\Pi$; the existence of $\Pi$ is guaranteed by
Theorem~\ref{theoremRRQR}, its computation can be performed using a Rank-Revealing QR Factorization \cite{HP92}. We refer to~\cite{BQ1},~\cite{BQ2} for a detailed description of algorithms for computing such a decomposition.

Note that, if $ J_{F_1}(0) $ is well-conditioned then there exists a closed neighbourough $\Delta_0$ of the origin contained in $Q_\varepsilon  \cap D$ such that $ J_{F_1}(x) $ is well-conditioned  for each $x \in \Delta_0$. In fact, since $F_1 \in \mathcal C^2(D)$, then $J_{F_1}(x)$ is a Lipschitz  function (with constant $\gamma$) on $\Delta_0$ and so, from Proposition~\ref{lemmaSigma}, for each $x \in \Delta_0$ we have that 
 $
\left | \sigma_r(J_{F_1}(x)) - \sigma_r(J_{F_1}(0))  \right |  \le \|J_{F_1}(x) - J_{F_1}(0)\|_2 \le
\gamma \|x\|_2 $
which implies $\sigma_r(J_{F_1}(x)) \ge  \sigma_r(J_{F_1}(0)) - \gamma \|x\|_2$.
Since $\sigma_r(J_{F_1}(0))> k\delta $, if~$\| x\|_2$ is small enough, that is if $x$ belongs to a suitable neighbourough
$\Delta_0$ of the origin, then $J_{F_1}(x)$ has full numerical $(\delta,k)$-rank on $\Delta_0$ and so the RFA executes some iterations  at step~{\bf RF2} for approximating the solution of $F_1=0$. 

We analyze the output of the RFA; if the output $\bar x \neq$ NULL   then the peculiarities of $\bar x$ give us different information on the underdetermined nonlinear system $F=0$.
 If $\bar x=0$ is returned, then two possible cases can occur:
either a single iteration or no iteration of the loop of step~{\bf RF2} has been performed. 
In the former case $h$ is  the zero vector: it means that $F_1(\bar x)=0$  and so $\widehat F(\bar x)=0$, that is the exact solution of $\widehat F=0$ has been found. From Theorem~\ref{Differenza_F} it follows that $\|F(\bar x)\|_2=\| F(\bar x)- \widehat F(\bar x)\|_2 < q(m,r)\delta$. 
 In the latter case no iteration has been executed since $\rank_{\delta,k}(J_{F_1}(0)) <r$, though 
$\rank_{\delta,k}(J_F(0)\:|\: F(0))=r$; in this situation $h=(1\dots 1)^t$ and, if  some simple hypotheses on a closed convex set
$\Delta \subseteq Q_\varepsilon \cap D$  are satisfied,  Theorem~\ref{noExistenceWellCondCase}
shows  that there are no solutions of $F=0$ in~$\Delta$.

\begin{theorem}\label{noExistenceWellCondCase}
Let $\Delta  \subseteq Q_\varepsilon \cap D$ be a closed convex set containing the origin and let $\gamma$ be the Lipschitz constant of  $J_{F_1}$  on~$\Delta$. Suppose  that the matrix
$\left( J_{F_1}(0) \:|\:  F_1(0)\right)$ has full numerical
$(\delta,k)$-rank $r$ with $k> 1+(\sqrt{n} + n \gamma)\delta$. If
$\rank_{\delta,k}\left (J_{F_1}(0) \right)< r$ then there are no
solutions of $F=0$ in $\Delta$.
\end{theorem}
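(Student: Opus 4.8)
The plan is to argue by contraposition: I assume that the constrained system $F=0$ has a solution $x^* \in \Delta$ and show that this forces the full numerical rank $\rank_{\delta,k}(J_{F_1}(0))=r$, contradicting the hypothesis $\rank_{\delta,k}(J_{F_1}(0))<r$. The very first observation is that, since $F_1=\Pi_1^t F$ merely selects $r$ of the $m$ components of $F$, the equality $F(x^*)=0$ immediately yields $F_1(x^*)=0$. This is the one point where I use that $F_1$ is a genuine subvector of $F$ and not an arbitrary combination of its entries.

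Next I would linearize. Writing the first-order Taylor expansion of $F_1$ at the origin and using that $J_{F_1}$ is Lipschitz with constant $\gamma$ on the convex set $\Delta$, I get $F_1(x^*)=F_1(0)+J_{F_1}(0)x^*+R$ with $\|R\|_2\le\frac{\gamma}{2}\|x^*\|_2^2$. Because $F_1(x^*)=0$, this reads $F_1(0)=-J_{F_1}(0)x^*-R$, that is, the last column $F_1(0)$ of the matrix $A:=(J_{F_1}(0)\:|\:F_1(0))$ is, up to the small vector $R$, a linear combination of the columns of $J_{F_1}(0)$. Setting $B:=(J_{F_1}(0)\:|\:-J_{F_1}(0)x^*)=J_{F_1}(0)\,(I_n\:|\:-x^*)$, I have $A-B=(0\:|\:R)$, hence $\|A-B\|_2=\|R\|_2\le\frac{\gamma}{2}n\varepsilon^2$ after invoking $\|x^*\|_2\le\sqrt{n}\,\varepsilon$, which follows from $x^*\in\Delta\subseteq Q_\varepsilon$.

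The core of the estimate then combines two singular-value bounds on $B$. On one side, Proposition~\ref{lemmaSigma} gives $\sigma_r(B)\ge\sigma_r(A)-\|A-B\|_2>k\delta-\frac{\gamma}{2}n\varepsilon^2$, since $A$ has full numerical $(\delta,k)$-rank $r$ by assumption. On the other side, the submultiplicative inequality $\sigma_r(J_{F_1}(0)\,N)\le\sigma_r(J_{F_1}(0))\,\|N\|_2$ applied with $N=(I_n\:|\:-x^*)$, together with $\|N\|_2\le 1+\|x^*\|_2\le 1+\sqrt{n}\,\varepsilon$, yields $\sigma_r(B)\le\sigma_r(J_{F_1}(0))\,(1+\sqrt{n}\,\varepsilon)$. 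Chaining these two inequalities, using $\varepsilon\le\delta$ and the hypothesis $k>1+(\sqrt{n}+n\gamma)\delta$, should produce $\sigma_r(J_{F_1}(0))>\delta$. Finally I close the contradiction: since $J_{F_1}(0)\in\Mat_{r\times n}(\R)$ has at most $r$ singular values, the assumption $\rank_{\delta,k}(J_{F_1}(0))<r$ forces $\sigma_r(J_{F_1}(0))<\delta$, which is incompatible with $\sigma_r(J_{F_1}(0))>\delta$; hence no solution $x^*$ can lie in $\Delta$.

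I expect the main obstacle to be the second singular-value bound, namely controlling $\sigma_r$ of the product $J_{F_1}(0)\,N$ (this submultiplicative estimate is standard but is not among the results stated earlier, so it would need to be justified, e.g.\ via $\sigma_r(XY)=\min_{\|u\|_2=1}\|Y^t X^t u\|_2\le\|Y\|_2\,\sigma_r(X)$), together with the bookkeeping needed to make the crude constants line up exactly with the threshold $k>1+(\sqrt{n}+n\gamma)\delta$. The remaining ingredients, the Taylor remainder bound and the application of Proposition~\ref{lemmaSigma}, are routine.
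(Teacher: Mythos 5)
Your proposal is correct, and it reaches the contradiction by a genuinely different route than the paper. Both arguments begin the same way: assume $x^*\in\Delta$ with $F(x^*)=0$, deduce $F_1(x^*)=0$ because $F_1$ is a subvector of $\Pi^t F$, and linearize $F_1$ at the origin using the Lipschitz constant $\gamma$ of $J_{F_1}$ on the convex set $\Delta$. From there the paper works on the side of the \emph{augmented} matrix: it invokes Proposition~\ref{consequenceNumRank} to produce an exact rank-$r_1$ matrix within $\delta$ of $J_{F_1}(0)$ (where $r_1=\rank_{\delta,k}(J_{F_1}(0))$), splits it by a permutation into row blocks $A_1$ and $WA_1$, and assembles from the linearized relation an explicit matrix $B$ of exact rank $r_1<r$ with $\|B-(J_{F_1}(0)\,|\,F_1(0))\|_2\le\delta+(\sqrt{n}+n\gamma)\delta^2$; Proposition~\ref{lemmaSigma} then forces $\sigma_r(J_{F_1}(0)\,|\,F_1(0))\le\delta+(\sqrt{n}+n\gamma)\delta^2<k\delta$, contradicting the full numerical rank hypothesis. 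You instead transfer the bound in the opposite direction: you factor the linearized augmented matrix as $J_{F_1}(0)\,(I_n\,|\,-x^*)$ and use the submultiplicative inequality $\sigma_r(XY)\le\sigma_r(X)\,\|Y\|_2$ --- which you correctly flag as the one ingredient absent from the paper's preliminaries, and whose one-line justification via $\sigma_r(XY)=\min_{\|u\|_2=1}\|Y^tX^tu\|_2$ is valid here since $r\le m<n$, so $\sigma_r$ is the smallest singular value of the matrices involved --- to convert $\sigma_r(J_{F_1}(0)\,|\,F_1(0))>k\delta$ into $\sigma_r(J_{F_1}(0))>\delta$, contradicting $\rank_{\delta,k}(J_{F_1}(0))<r$ directly, since by Definition~\ref{defNumericalRank} that hypothesis gives $\sigma_r(J_{F_1}(0))\le\sigma_{r_1+1}(J_{F_1}(0))<\delta$. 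Your bookkeeping does close: the needed inequality $k\delta-\tfrac{\gamma}{2}n\varepsilon^2>\delta(1+\sqrt{n}\,\varepsilon)$ follows from $\varepsilon\le\delta$ and $k>1+(\sqrt{n}+n\gamma)\delta$ with room to spare. As for what each approach buys: yours is shorter, dispenses entirely with Proposition~\ref{consequenceNumRank}, the permutation and the factor $W$, and rests on the rigorous integral-form Taylor remainder (constant $\gamma/2$), whereas the paper appeals to a single intermediate point $z$ with $0=F_1(0)+J_{F_1}(z)x^*$, a mean-value formulation that is not literally valid for vector-valued maps (its bound survives, with constant $\gamma$, via the integral form); the price of your route is the external singular-value product inequality, which you would need to prove or cite, while the paper stays strictly within the toolkit it sets up in Section~\ref{num_r}.
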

\begin{proof}
Note that, since $F\in \mathcal C^2(\Delta)$ and $\Delta$ is a
closed convex set, $J_F$ is a Lipschitz function on~$\Delta$. Let
$r_1=\rank_{\delta,k}\left (J_{F_1}(0) \right)$ and~$A \in \Mat_{r
\times n}(\R)$ be such that $\rank(A)=r_1$ and $\| J_{F_1}(0) - A
\|_2 < \delta$: the existence of such a matrix~$A$ follows from
Lemma~\ref{consequenceNumRank}. Since~$A$ has exact rank~$r_1$
there exist a permutation matrix $\Pi \in \Mat_{r}(\R)$ and a
matrix $W \in \Mat_{(r-r_1) \times r_1}(\R)$ such that
\begin{equation*}
\Pi A= \left (\begin{array}{l} A_1 \\ W A_1
\end{array} \right )
\begin{array}{l} \}r_1 \\ \} r-r_1
\end{array} \;\;\; \textrm{and}\;\;\; \rank(A_1)=r_1
\end{equation*}
We can express the matrix $\Pi J_{F_1}(0)$ as follows
\begin{equation*}
\Pi J_{F_1}(0) = \left ( \begin{array} {c} A_1 +E_{11} \\ W
A_1+E_{21} \end{array} \right) \quad \textrm{with} \;\; E=
 \left( \begin{array} {c}E_{11} \\ E_{21}
\end{array} \right) \;\; \textrm{and} \;\; \|E\|_2  < \delta
\end{equation*}
Suppose for a  contradiction that there exists $x^* \in \Delta$
such that ${F(x^*)=0}$, and so $F_1(x^*)=0$. From Taylor's theorem
applied to~$F_1(x)$ we know that there exists a point~$z$ on the
line connecting~$0$ to~$x^*$, such that
$0=F_1(x^*)=F_1(0)+J_{F_1}(z)x^*$, and so
$$0=\Pi F_1(0) + \Pi J_{F_1}(z)x^*= \Pi F_1(0)+ \Pi J_{F_1}(0)x^* +\Pi (J_{F_1}(z)-J_{F_1}(0))x^*$$
Using an obvious partitioning $(F_{11},F_{12})^t$ and
$(E_{12},E_{22})^t$ of the matrices $\Pi F_1(0)$ and
${\Pi(J_{F_1}(z)-J_{F_1}(0))}$ we obtain
\begin{equation*}
0= \left (\begin{array}{c} F_{11}\\ F_{12}
\end{array} \right ) +
\left (\begin{array}{c} A_1 + E_{11}\\ W A_1 + E_{21}
\end{array} \right )  x^*
+\left( \begin{array}{c}E_{12}\\E_{22} \end{array}\right) x^*
\end{equation*}
and therefore
\begin{equation*}
\left \{ \begin{array}{lll}
A_1 x^* &=& -F_{11}  -(E_{11} +E_{12})x^* \\
F_{12} &=&  W\left(F_{11}  + (E_{11}  + E_{12})x^*\right) -(E_{21}
+ E_{22})x^*
\end{array} \right .
\end{equation*}
It follows that $\Pi \left ( J_{F_1}(0) \:|\: F_1(0) \right) $ is equal to the matrix 
\begin{equation*}
\left (
\begin{array}{cc}
 A_1 + E_{11}   & F_{11} \\
 WA_1  +E_{21} &W\left(F_{11}  + (E_{11}  + E_{12})x^*\right) -(E_{21} + E_{22})x^*
\end{array} \right )
\end{equation*}
Let $B \in \Mat_{r \times n}(\R)$ such that
\begin{equation*}
\Pi B = \left (
\begin{array}{ccc}
A_1 &\;\;& F_{11}  + (E_{11} +E_{12})x^*   \\
WA_1  & \;\;&W\left(F_{11}  + (E_{11} +E_{12})x^*\right)
\end{array} \right )
\end{equation*}
since $\|\cdot\|_2$ is invariant under permutation and $x^*, z \in
\Delta$ we have
\begin{equation*}
\begin{array}{rcl}
\left \| B - \left ( J_{F_1}(0) \:|\: F_1(0) \right ) \right \|_2
&=&
\left \| \left( -E \:|\:E x^* +  \Pi(J_{F_1}(z)-J_{F_1}(0))x^*\right) \right \|_2 \\
&\le&  \left \| E \right \|_2 \left ( 1+ \|x^*\|_2 \right )+ \|J_{F_1}(z)-J_{F_1}(0)\|_2 \|x^*\|_2 \\
&\le&  \delta (1+ \sqrt{n}\varepsilon )+ \gamma n\varepsilon^2 \le
\delta + ( \sqrt{n} + n \gamma)\delta^2
\end{array}
\end{equation*}
From $\rank(A_1) = r_1$ we have that $\rank(B)=r_1$, and so
$\sigma_r(B)=0$. From Proposition~\ref{lemmaSigma} it follows that
$\sigma_r\left ( J_{F_1}(0) \:|\: F_1(0) \right ) \le  \delta + (
\sqrt{n} + n \gamma)\delta^2$.
Since from the hypothesis $\sigma_r ( J_{F_1}(0) \:|\: F_1(0) ) >
k \delta$ and $k > 1 + ( \sqrt{n} + n \gamma)\delta$ we obtain a
contradiction.
\end{proof}

If the RFA returns a non zero vector $\bar x \in Q_\varepsilon \cap D$ then $F(\bar x)$ assumes small values; in particular, $\|F(\bar x)\|_2$ is bounded by a function depending on the data tolerance as  the following theorem shows.

\begin{theorem}\label{theoremValoreRho}
Let $\bar x$ be the output of the RFA applied to $F \in \mathcal
C^2(D)$ and $H$ be the closed convex hull of the sequence of
points computed by the RFA. If $H \subseteq Q_\varepsilon  \cap D$
then
$$
 \|\widehat F(\bar x)\|_2 = O(\delta^2) \quad \textrm{ and }\quad
 \|F(\bar x)\|_2 \le q(m,r)\delta +O(\delta^2)
$$
Further, let $\Delta$ be a closed convex set such that $H \subseteq \Delta \subseteq Q_\varepsilon  \cap D$; if there exists
$x^* \in \Delta$ such that $F(x^*)=0$ then $ \|F(\bar x)\|_2 = O(\delta^2)$.
\end{theorem}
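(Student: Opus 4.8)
The plan is to reduce all three assertions to a single quantitative fact about the final iterate: that the modified Normal Flow loop drives the first block $F_1$ of the permuted system down to size $O(\delta^2)$. Everything else follows by feeding this into the definition~(\ref{HatF}) of $\widehat F$ and into Theorem~\ref{Differenza_F}. First I would record the structural consequences of the hypotheses. Since $H$ is the closed convex hull of the RFA iterates, it is closed, convex, and contains the starting point $0$; by assumption $H \subseteq Q_\varepsilon \cap D$, so the returned $\bar x$ lies in $Q_\varepsilon \cap D$, the test at step RF3 succeeds, and $\bar x$ is the genuine last iterate. I would work in the case where RF2 stopped through the convergence test, so that the last displacement $h = \bar x - \tilde x$ (with $\tilde x$ the penultimate iterate) solves $J_{F_1}(\tilde x) h = -F_1(\tilde x)$ and obeys $\|h\|_2 \le \omega$, with $\omega$ chosen of the order of $\delta$.

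The main step is the estimate $\|\widehat F(\bar x)\|_2 = O(\delta^2)$. Because $F_1 \in \mathcal C^2$ and the segment $[\tilde x, \bar x]$ lies in the compact convex set $H \subseteq D$, its second derivatives are bounded there and a Taylor expansion gives $F_1(\bar x) = F_1(\tilde x) + J_{F_1}(\tilde x) h + O(\|h\|_2^2)$; the first two terms cancel by the defining equation of $h$, leaving $\|F_1(\bar x)\|_2 = O(\|h\|_2^2) = O(\omega^2) = O(\delta^2)$. By~(\ref{HatF}) we have $\widehat F(\bar x) = (F_1(\bar x)^t, (GF_1(\bar x))^t)^t$ with $G = R_{12}^t R_{11}^{-t}$, hence $\|\widehat F(\bar x)\|_2 \le \sqrt{1 + \|G\|_2^2}\,\|F_1(\bar x)\|_2$. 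Invoking the standing assumption of a well-determined gap between $\sigma_r$ and $\sigma_{r+1}$ together with $\sigma_{\min}(R_{11}) \ge \sigma_r/q(m,r)$ from Theorem~\ref{theoremRRQR}, the factor $\|G\|_2 \le \|R_{11}^{-1}\|_2\|R_{12}\|_2$ is $O(1)$ (it is controlled by the conditioning of the well-conditioned submatrix $Q_1 R_{11}$), so $\|\widehat F(\bar x)\|_2 = O(\delta^2)$.

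The two statements about $F$ then follow by comparison. Applying Theorem~\ref{Differenza_F} with $\Delta = H$---legitimate, since $H$ is closed, convex, contains the origin, and is contained in $Q_\varepsilon \cap D$---and using that $\Pi$ is orthogonal, I would write $\|F(\bar x)\|_2 = \|\Pi^t F(\bar x)\|_2 \le \|\widehat F(\bar x)\|_2 + \|\widehat F(\bar x) - \Pi^t F(\bar x)\|_2 \le O(\delta^2) + q(m,r)\delta + O(\delta^2)$, which is $q(m,r)\delta + O(\delta^2)$. For the final assertion, the existence of some $x^* \in \Delta$ with $F(x^*) = 0$ triggers the sharper conclusion of Theorem~\ref{Differenza_F}, namely $\|\widehat F(\bar x) - \Pi^t F(\bar x)\|_2 = O(\delta^2)$ (here $\bar x \in H \subseteq \Delta$); combined with $\|\widehat F(\bar x)\|_2 = O(\delta^2)$ this yields $\|F(\bar x)\|_2 = O(\delta^2)$.

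The step I expect to be most delicate is precisely the transfer from $F_1$ to the full $\widehat F$: the cancellation argument only controls the first block $F_1(\bar x)$, and to keep the second block $GF_1(\bar x)$ at the same order one must ensure $\|G\|_2$ stays bounded, which is exactly where the well-determined gap and the $R_{11}$ estimate of Theorem~\ref{theoremRRQR} are needed. The convex-hull hypothesis is the other subtle ingredient, since it is what guarantees that the Taylor segment never leaves $Q_\varepsilon \cap D$ and hence that the $O(\delta^2)$ estimates, valid on that region, may legitimately be used at $\bar x$.
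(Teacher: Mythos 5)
Your skeleton is the paper's: a Taylor cancellation at the final RFA step to get $\|\widehat F(\bar x)\|_2 = O(\delta^2)$, then the triangle inequality with Theorem~\ref{Differenza_F} (taking $\Delta = H$, and the sharper $O(\delta^2)$ bound when a zero $x^*$ exists) for the two statements about $F$. Your detour through $F_1$ is only cosmetically different: since $\widehat F = (F_1^t, (GF_1)^t)^t$, the relation $J_{F_1}(\bar x - h)h = -F_1(\bar x - h)$ immediately gives $J_{\widehat F}(\bar x - h)h = -\widehat F(\bar x - h)$, and the paper expands $\widehat F$ directly; the factor $\sqrt{1+\|G\|_2^2}$ you worry about is present in the paper's version too, hidden in the second-derivative bound of $\widehat F$ inside the $O(\cdot)$, and in both treatments it is a constant of the fixed data (by Theorem~\ref{theoremRRQR}, $\|G\|_2 \le \|R_{12}\|_2\,\|R_{11}^{-1}\|_2 \le q(m,r)\,\sigma_1/\sigma_r$ for the matrix in~(\ref{RRQRofJF})). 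The genuine gap is elsewhere: you bound the last displacement by $\|h\|_2 \le \omega$ and then assume ``$\omega$ chosen of the order of $\delta$''. The theorem makes no such assumption --- in Algorithm~\ref{algRF}, $\omega \ll 1$ is a fixed threshold with no stated relation to $\delta$, so $O(\omega^2)$ is not $O(\delta^2)$ (take $\delta = 10^{-4}$, $\omega = 10^{-2}$). Worse, you explicitly restrict to the case where {\bf RF2} exits through the convergence test; if instead the loop exits because the rank check $\rank_{\delta,k}(J_{F_1}(\bar x)) = r$ fails, the last computed $h$ satisfies no smallness bound from $\omega$ at all, and your estimate collapses.

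The repair is precisely the second, unexploited role of the convex-hull hypothesis, and it is how the paper argues: both $\bar x$ and the penultimate iterate $\bar x - h$ lie in $H \subseteq Q_\varepsilon$, hence every coordinate of $h$ has absolute value at most $2\varepsilon$, so $\|h\|_2 \le 2\sqrt{n}\,\varepsilon \le 2\sqrt{n}\,\delta$ (the paper states $\sqrt{n}\,\varepsilon$; the constant is immaterial for the $O$-estimate). This bound holds no matter which test terminated the loop, so $\widehat F(\bar x) = \widehat F(\bar x - h) + J_{\widehat F}(\bar x - h)h + O(\|h\|^2) = O(\|h\|^2) = O(\delta^2)$ goes through uniformly, and the rest of your argument --- which matches the paper's verbatim, including permutation invariance of $\|\cdot\|_2$ --- is then correct. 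You sensed the hypothesis mattered (``the Taylor segment never leaves $Q_\varepsilon \cap D$''), but its indispensable use is to bound $\|h\|$ by $O(\delta)$ without any assumption on $\omega$; as written, your proof establishes the theorem only for a restricted variant of the RFA in which $\omega$ is tied to $\delta$ and the rank-exit branch never fires.
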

\begin{proof}
Let $h$ be the displacement computed in the last iteration of the RFA, that is let $h$ satisfies $J_{F_1} (\bar x - h) h = -F_1(\bar
x - h)$ and consequently $J_{\widehat F} (\bar x - h) h = -\widehat F(\bar x - h)$. Since $\bar x-h \in H$,  $\widehat F\in
\mathcal C^2(H)$ and $H$ is a closed convex set, we express $\widehat F(\bar x)$ using the Taylor expansion of~$\widehat F$
centered at $\bar x- h$ we have
$\widehat F(\bar x ) = \widehat F(\bar x -h ) + J_{\widehat F}(\bar x- h ) h +O(\|h\|^2) = O(\|h\|^2)$
and, from  $\|h\|_2 \le \sqrt{n} \varepsilon \le \sqrt{n} \delta$, it  follows that $\|\widehat F(\bar x)\|_2= O(\delta^2)$.

Since $\|F(\bar x)\|_2 = \|\Pi F(\bar x)\|_2 \le \|\widehat F(\bar x) - \Pi F(\bar x)\|_2+ \|\widehat F(\bar x) \|_2$, from
Theorem~\ref{Differenza_F}  we have $\|F(\bar x)\|_2 \le q(m,r)\delta +O(\delta^2)$ and, if there exists $x^* \in \Delta$
such that $F(x^*)=0$ then $ \|F(\bar x)\|_2 = O(\delta^2)$.
\end{proof}

\section{The Low-degree Polynomial Algorithm}\label{mainResult}
Given a set~$\Xeps$ of $s$ distinct empirical points of $\R^n$ in this paper we address the problem to determine a 
polynomial of $P=\R[x_1,\ldots,x_n]$ whose degree is bounded by the minimal degree of the elements of the vanishing ideal $\I(\X)$ 
and whose affine variety exactly contains  an admissible perturbation~$\X^*$ of~$\Xeps$.

In this section we present an algorithm that computes an admissible perturbation~$\overline \X$ of~$\Xeps$ and a polynomial~$f$, whose degree is bounded by the minimal degree of the elements of $\I(\X)$; since $f$ has low degree and, by construction, assumes small values at $\overline \X$, it is a good candidate to be the solution to the addressed problem.
In fact, if some simple additional hypotheses are satisfied, $f$ turns out to be  the polynomial we are looking for, as in Theorem~\ref{theoremBasedOnKantorovich} we prove that there exists an admissible perturbation~$\X^*$, slightly differing from~$\overline \X$,  contained in 
the affine variety $\Z(f)$.

As already pointed out in the Introduction a straightforward application of the Buch\-ber\-ger-M\"oller (BM) algorithm to empirical data provides unreliable results, because the problem of deciding whether a polynomial vanishes at the input points is very sensitive to perturbations.
Nevertheless the strategy of the BM algorithm may be adapted to limited precision data; for instance the NBM and SOI algorithms (\cite{AFT08}, \cite{Fa10}) employ such an approach to compute polynomials which almost vanish at the input points.
Also in this context we present a modified version of the BM algorithm.

The core of the BM algorithm is the stepwise construction of the monomial basis~$\QB$ of the quotient ring~$P/\I(\X)$ viewed as an $\R$-vector space.
Initially~$\QB$ comprises just the power product~$1$;  at the generic iteration $\QB=\{t_1,\ldots,t_\nu\}$ and, fixing a term ordering $\tau$, the smallest $t >_\tau t_i$, $i=1\dots\nu$,  is considered. If the evaluation vector $t(\X)$ is linearly dependent on the columns of the matrix $M_\QB(\X)$, that is if
\begin{equation}\label{BMcheck}
\rho(\X)=0
\end{equation}
where $\rho(\X)=t(\X) - M_\QB(\X) \alpha(\X)$ is the residual of the least squares problem $M_\QB(\X) \alpha(\X) = t(\X)$, then
the polynomial $t-\sum_{i=1}^\nu \alpha_i(\X) t_i$ is formed and added to a basis, otherwise~$t$ is added to~$\QB$.

We adapt the generic iteration of the BM algorithm to our case by replacing check~(\ref{BMcheck})~by
\begin{equation}\label{problem}
 \rho(\X^*)=0 \textrm{ for some admissible perturbation} \; \X^* \; \textrm{of } \Xeps
\end{equation}
If problem~(\ref{problem}) admits solution then the polynomial $f^*=t-\sum_{i=1}^\nu \alpha_i(\X^*) t_i$ is formed; $f^*$  and $\X^*$ are a solution we were looking for (by construction~$f^*$ vanishes at $\X^*$). Otherwise~$t$ is added to~$\QB$.

Analogously to the strategy used in \cite{AFT08}, in order to make problem~(\ref{problem}) effectively
 solvable we express any admissible perturbation of~$\Xeps$ as a function of~$sn$ 
 {\bf error variables} $\ebold = (e_{11},\dots,e_{s1},e_{12},\dots,e_{s2},\dots,e_{1n},\dots,e_{sn})$, 
 where each variable~$e_{kj}$ represents the perturbation in the~$j$-th coordinate of the specified 
 value $p_k \in \X$. Specifically, a generic admissible perturbation~$\Xtilde$ of~$\Xeps$ can be 
 expressed as $\Xtilde=\X(\ebold)=\{p_1(\ebold_1),\ldots,p_s(\ebold_s)\}$ where for each $k= 1 \ldots s$
\begin{equation*}
\begin{array}{lll}
&&\ebold_k = (e_{k1},\ldots,e_{kn})\\ 
&&p_k(\ebold_k) = \left (
p_{k1} + e_{k1},\; p_{k2} + e_{k2}, \dots p_{kn}  +  e_{kn} \right) \;\;\; \textrm{ and } \;\;\; \|\ebold_k\|_\infty \le \varepsilon
\end{array}
\end{equation*}
Using this formalization,  $\Xtilde=\X(\ebold)$ is a function in the variables~$\ebold$ with admissible domain $Q_\varepsilon=\{\ebold \in\R^{sn} \: : \:  \|\ebold\|_\infty \le \varepsilon\}$. It follows that $\alpha(\Xtilde)$, $\rho(\Xtilde)$, and $M_\QB(\Xtilde)$ are functions of $\ebold$, simply denoted by $\alpha(\ebold)$, $\rho(\ebold)$ and $M_\QB(\ebold)$; in particular $\rho:\R^{sn} \rightarrow \R^s$ is defined by
\begin{equation}\label{rho}
\rho(\ebold)=t(\ebold)-M_\QB(\ebold)\alpha(\ebold)
\end{equation}
where $\alpha(\ebold)=M_\QB(\ebold)^\dagger t(\ebold)$.
It follows that problem ($\ref{problem}$) is equivalent to determine whether
\begin{equation}\label{rhoE}
\rho(\ebold)=0   \textrm{ for some }  \ebold \in Q_\varepsilon \cap  D_\QB
\end{equation}
where $D_\QB$ is the domain of $\rho$, that is $D_\QB=\{\ebold \in \R^{sn} : M_\QB(\ebold) \textrm{ is full rank}\}$.

In~\cite{Fa10} it is proved that the problem~(\ref{rhoE}) has no solution if
\begin{equation}\label{condition}
|\rho(0)| > \varepsilon | I - M_\QB(0) M_\QB(0)^\dagger| \sum_{k=1}^n |\partial_k t(0) - M_{\partial_k \QB}(0) \alpha(0)| + O(\varepsilon^2)
\end{equation}
where  the absolute value of a matrix is  the matrix consisting of the absolute values of its elements and the lower bound  means that the relation holds componentwise.
In our algorithm we employ such a result to study problem~(\ref{rhoE}): we solve the underdetermined nonlinear system~(\ref{rhoE}) only if condition~(\ref{condition}) is not satisfied.
Unfortunately, finding an exact solution of problem~(\ref{rhoE}) can be computationally very hard: in fact the analytic expression of~$\rho(\ebold)$ can be very hard to obtain and even when an explicit expression of~$\rho(\ebold)$ is available, an exact solution $\ebold^*$ of $\rho(\ebold)=0$ can be computed only in very few cases (specifically when~$n$ and~$s$ are small), while in general only an approximation of $\ebold^*$ can be effectively determined. In our algorithm we approximate the solution of problem~(\ref{rhoE}) using the results of Section~\ref{underdetNonlinearSystem}; in particular,  we design the The Low-degree Polynomial Algorithm following the frame of the BM algorithm and solving the nonlinear systems~(\ref{rhoE}) using the RFA.

\begin{algorithm}\label{newNBM}{\bf (The Low-degree Polynomial Algorithm - LPA)}\\
Let $\Xeps=\{p_1^\varepsilon,\ldots,p_s^\varepsilon\}$ be a finite set of distinct empirical points of~$\R^n$, $k>1$, $\delta \ge \varepsilon$ of the same magnitude as~$\varepsilon$ and $\tau$ be a term ordering.\\
Consider the following sequence of steps.
\begin{enumerate}
\item[{\bf LP1}] Start with  $t=1$ and the lists $\QB=[1]$ and $L=[\;]$.
\item[{\bf LP2}] Add to~$L$ those elements of $\{x_1 t, \ldots,x_n t\}$ which are not multiples of elements of~$L$.  Let $t=\min_{\tau}(L)$ and delete it from $L$.
\item[{\bf LP3}] Choose $\bar \ebold=0$ and consider the residual function $\rho$ defined in~(\ref{rho}).
\item[{\bf LP4}] If $\rho(0)$ satisfies condition~(\ref{condition}) then\\
$\phantom{xxx}$ $\bullet$ add~$t$ to the list~$\QB$ and continue with step ${\bf LP2}$;\\
\phantom{aaa} else\\
$\phantom{xxx}$ $\bullet$ 
apply the RFA to $\rho$ on $Q_\varepsilon \cap D_\QB$ with thresholds~$k$ and $\delta$.
\item[{\bf LP5}] If  the RFA returns NULL  or $\bar \ebold=0$ and  $\|h\| \ne 0$  then\\
$\phantom{xxx}$ $\bullet$ add~$t$ to the list~$\QB$ and continue with step~${\bf LP2}$;\\
\phantom{aaa} else\\
$\phantom{xxx}$ $\bullet$ compute $f=t-\sum_{t_i \in \QB} \alpha_i(\bar \ebold)t_i$; return  $\bar \ebold$, $f$, $\QB$ and stop.
\end{enumerate}
\end{algorithm}
We make a few observations on the LPA.
At each iteration the domain~$D_\QB$ of the residual function $\rho$ contains the origin. In fact since the residuals computed at the previous iterations do not vanish at zero the matrix $M_\QB(0)$ is full rank.
Further, at step ${\bf LP4}$ each iteration of the RFA requires the evaluation of~$\rho$ and~$J_\rho$ at a given point~$\ebold$.
The value of $\rho(\ebold)$ is simply obtained by solving the numerical least squares problem $M_\QB(\ebold) \alpha(\ebold)= t(\ebold)$ and computing its residual.
Proposition \ref{Calcolo_Jac} illustrates how to compute each column of the Jacobian matrix $J_\rho(\ebold)$ without passing through the explicit expression of the function~$\rho$.

\begin{theorem}
The Low-degree Polynomial Algorithm stops after finitely many steps and returns $\bar \ebold$, $f$, $\QB$ such that $\bar \ebold \in Q_\varepsilon \cap D_\QB$.

Let $\rho$ be the residual function computed during the last iteration of the LPA and $H$ be the closed convex hull of the sequence of points computed by the RFA applied to~$\rho$. If $H \subseteq Q_\varepsilon  \cap D_\QB$ then
$$\|f(\X(\bar \ebold))\|_2 \le q(s,r)\delta + O(\delta^2) $$
where $r=\rank_{\delta,k}(J_\rho(0)\:|\: \rho(0))$ and $q(s,r)=\sqrt{r(s-r) + \min(r,s-r)}$.

Further, let $\Delta$ be a closed convex set such that $H \subseteq \Delta \subseteq Q_\varepsilon  \cap D_\QB$;
if there exists $\ebold^* \in  \Delta$ such that $\rho(\ebold^*)=0$ then
$\|f(\X(\bar \ebold))\|_2 = O(\delta^2) $
\end{theorem}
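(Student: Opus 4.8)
The plan is to split the statement into three parts---finite termination together with $\bar{\ebold}\in Q_\varepsilon\cap D_\QB$, the bound $\|f(\X(\bar{\ebold}))\|_2\le q(s,r)\delta+O(\delta^2)$, and the sharper $O(\delta^2)$ bound under the existence of an exact zero---and to reduce the two estimates to Theorem~\ref{theoremValoreRho} through a single algebraic identity. For termination I would argue exactly as for the Buchberger--M\"oller algorithm, tracking the invariant recorded in the remark after Algorithm~\ref{newNBM}: every power product appended to $\QB$ has nonvanishing residual $\rho(0)\neq 0$, so its evaluation vector $t(\X)$ is linearly independent of the columns of $M_\QB(0)=M_\QB(\X)$. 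Consequently the evaluation vectors of the elements of $\QB$ stay linearly independent, $M_\QB(0)$ keeps full column rank (hence $0\in D_\QB$ at every pass), and $\QB$ can never hold more than $s$ power products. The key observation is that once $|\QB|=s$ the matrix $M_\QB(\X)$ is square of rank $s$, so the next candidate $t$ satisfies $\rho(0)=0$; then condition~(\ref{condition}) cannot hold, the RFA started at $\bar{\ebold}=0$ returns the displacement $h=0$, and step~{\bf LP5} enters its \emph{else} branch, producing $f$ and stopping. Thus at most $s$ enlargements of $\QB$ occur, and since each pass of {\bf LP2}--{\bf LP5} consumes one border term, the loop halts after finitely many steps. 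Finally $\bar{\ebold}$ is returned only through the \emph{else} branch of {\bf LP5}, i.e.\ when the RFA output is non-NULL; by step~{\bf RF3} of Algorithm~\ref{algRF} this output lies in $Q_\varepsilon\cap D_\QB$, giving $\bar{\ebold}\in Q_\varepsilon\cap D_\QB$.

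The bridge to the analytic estimates is the identity $f(\X(\bar{\ebold}))=\rho(\bar{\ebold})$. First I would note that $f=t-\sum_{t_i\in\QB}\alpha_i(\bar{\ebold})\,t_i$ has \emph{fixed} coefficients $\alpha_i(\bar{\ebold})$, so evaluating it at the perturbed configuration $\X(\bar{\ebold})$ yields
$$
f(\X(\bar{\ebold})) = t(\bar{\ebold}) - \sum_{t_i\in\QB}\alpha_i(\bar{\ebold})\,t_i(\bar{\ebold}) = t(\bar{\ebold}) - M_\QB(\bar{\ebold})\,\alpha(\bar{\ebold}) = \rho(\bar{\ebold}),
$$
which is precisely the residual function~(\ref{rho}) evaluated at $\bar{\ebold}$. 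Hence $\|f(\X(\bar{\ebold}))\|_2=\|\rho(\bar{\ebold})\|_2$, and it suffices to bound $\|\rho(\bar{\ebold})\|_2$. I would then check that $\rho$ meets the hypotheses of Theorem~\ref{theoremValoreRho}: on $D_\QB$ the matrix $M_\QB(\ebold)$ is polynomial in $\ebold$ and of full column rank, so $M_\QB(\ebold)^\dagger=(M_\QB^t M_\QB)^{-1}M_\QB^t$, and therefore $\rho$, are of class $\mathcal{C}^2$ (in fact $\mathcal{C}^\infty$). Since $\bar{\ebold}$ is by definition the output of the RFA applied to $F=\rho$ on $Q_\varepsilon\cap D_\QB$, with $m=s$ and $r=\rank_{\delta,k}(J_\rho(0)\,|\,\rho(0))$, the hypothesis $H\subseteq Q_\varepsilon\cap D_\QB$ lets me invoke Theorem~\ref{theoremValoreRho} verbatim: its first conclusion gives $\|\rho(\bar{\ebold})\|_2\le q(s,r)\delta+O(\delta^2)$, and its second conclusion, available once some $\ebold^*\in\Delta$ with $\rho(\ebold^*)=0$ exists, gives $\|\rho(\bar{\ebold})\|_2=O(\delta^2)$. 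Substituting the identity completes both estimates.

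The hard part is the termination argument, not the norm bounds. The delicate point is the maintenance of the full-rank invariant for $M_\QB(0)$ across the two distinct enlargement branches of {\bf LP4} and {\bf LP5}, and the verification that reaching $|\QB|=s$ genuinely forces the \emph{else} branch of {\bf LP5} rather than a further enlargement; once this is settled, the cardinality bound $|\QB|\le s$ and hence finiteness are immediate. By contrast, the two inequalities are a mechanical transcription of Theorem~\ref{theoremValoreRho} through the identity $f(\X(\bar{\ebold}))=\rho(\bar{\ebold})$, requiring only the routine smoothness check on $\rho$.
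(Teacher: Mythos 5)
Your proposal is correct and follows essentially the same route as the paper: termination via the Buchberger--M\"oller-style cardinality bound $|\QB|\le s$ (with the full-rank invariant for $M_\QB(0)$ forcing $\rho(0)=0$, hence $h=0$ and the \emph{else} branch of {\bf LP5}, once $M_\QB(0)$ is square), membership $\bar\ebold\in Q_\varepsilon\cap D_\QB$ from step {\bf RF3}, and both norm estimates obtained by applying Theorem~\ref{theoremValoreRho} to $\rho\in\mathcal C^\infty(D_\QB)$ via the identity $f(\X(\bar\ebold))=\rho(\bar\ebold)$. Your write-up is in fact somewhat more explicit than the paper's proof on the termination mechanics, but the decomposition and the key ingredients coincide.
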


\begin{proof}
At each iteration either the algorithm computes the polynomial $f$ and thus it stops, or a  term $t$ is added to $\QB$.
We observe that this latter instruction can be executed at most~$s-1$ times; in fact when~$\QB$ contains~$s$ terms, that is when $M_\QB(0)$ becomes a square matrix, the residual vector~$\rho(0)$ is zero, and  consequently $\bar \ebold =0$, $\rho(\bar \ebold)=0$, and a polynomial $f $ is computed. From the check of step ${\bf LP5}$ obviously follows that  $\bar \ebold \in Q_\varepsilon \cap D_\QB$.

The residual function $\rho \in \mathcal C^\infty(D_\QB)$ since each of its components is a rational function defined in $D_\QB$. The  thesis follows from Theorem~\ref{theoremValoreRho} applied to $\rho$ and the equality $f(\X(\bar \ebold))=\rho(\bar \ebold)$.
\end{proof}

In the following theorem, under additional hypotheses on the polynomial $f$, output of the LPA, we prove that there exists an admissible perturbation~$\X(\ebold^*)$ of~$\Xeps$ contained in $\Z(f)$;
 further, we provide an estimation of the distance between~$\X(\bar \ebold)$ and~$\X(\ebold^*)$
 which is usually much smaller than $\varepsilon$.

\begin{theorem}\label{theoremBasedOnKantorovich}
Let $(\bar \ebold, f, \QB)$ be the output of the LPA applied
to~$\Xeps$. For each $i=1 \ldots s$ let $f_i: \R^n \rightarrow \R$ be the function
defined by $f_i(\ebold_i)= f(p_i(\ebold_i))$; let  $r_i$ and $R_i$ be real positive numbers such that
\begin{equation*}
r_i=\varepsilon-\|\bar \ebold_i \|_\infty \quad and \quad  R_i < \min
\left\{ r_i, \; \frac{\|J_{f_i}(\bar \ebold_i)\|_2}{\gamma_i} \right\}
\end{equation*}
where $\gamma_i>0$ is a Lipschitz constant of~$J_{f_i}$ in
$B(\bar \ebold_i,r_i)$. Let~$\mu_i$ be an upper bound of~$\|J_{
f_i}^\dagger\|_2$ in~$B(\bar \ebold_i,R_i)$. If $\|J_{f_i}(\bar \ebold_i)\|_2 >0 $
and
\begin{equation}\label{valueRho}
| f_i (\bar \ebold_i) | <
\frac{ R_i} {\mu_i(2+\gamma_i R_i\mu_i)} \equiv \chi_i
\;\;\;\;\;\;\;\;\;\;\;\;\;\; \forall i=1\dots s
\end{equation}
then there exists $\ebold^*$ such that the set $\X(\ebold^*)$ is an admissible
perturbation of~$\Xeps$, $f(\X(\ebold^*))=0$, and $\|\ebold^* - \bar \ebold\|_\infty < \max R_i$.
\end{theorem}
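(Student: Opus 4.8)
The plan is to apply the Kantorovich theorem (Theorem~\ref{KantorovichTheorem}) separately to each of the $s$ scalar functions $f_i\colon \R^n \to \R$. The key observation that makes this possible is that the two conditions defining ``an admissible perturbation contained in $\Z(f)$'' decouple across the points: writing $\ebold^*=(\ebold_1^*,\dots,\ebold_s^*)$, the requirement $f(\X(\ebold^*))=0$ is equivalent to $f_i(\ebold_i^*)=0$ for every $i$, while admissibility is just the collection of the blockwise bounds $\|\ebold_i^*\|_\infty \le \varepsilon$. Hence it suffices to produce, for each fixed $i$, a zero $\ebold_i^*$ of $f_i$ lying within distance $R_i$ of the starting point $\bar\ebold_i$; the global perturbation is then obtained by concatenation.

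First I would set up the hypotheses of Theorem~\ref{KantorovichTheorem} for a fixed index $i$, taking $F=f_i$ (so $m=1$), initial point $x_0=\bar\ebold_i$, H\"older exponent $p=1$, and the open convex set $\Omega=B(\bar\ebold_i,R_i)$. Since $R_i<r_i$, we have $\Omega\subseteq B(\bar\ebold_i,r_i)$, so $\gamma_i$ is a Lipschitz constant of $J_{f_i}$ on $\Omega$ and $\mu_i$ bounds $\|J_{f_i}^\dagger\|_2$ there. To verify that $J_{f_i}$ has full rank $1$ throughout $\Omega$ I would combine the reverse triangle inequality with the Lipschitz bound: for $x\in\Omega$,
\[
\|J_{f_i}(x)\|_2 \ge \|J_{f_i}(\bar\ebold_i)\|_2 - \gamma_i\|x-\bar\ebold_i\|_2 > \|J_{f_i}(\bar\ebold_i)\|_2 - \gamma_i R_i >0,
\]
where positivity follows precisely from the assumption $R_i<\|J_{f_i}(\bar\ebold_i)\|_2/\gamma_i$. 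Finally, choosing $\eta=R_i$ makes $\bar\ebold_i\in\Omega_\eta$, since the open ball $B(\bar\ebold_i,R_i)$ equals $\Omega$.

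The crux of the argument is to show that hypothesis~(\ref{valueRho}) delivers a value of $\beta$ admissible for Theorem~\ref{KantorovichTheorem}. With $p=1$ one has $\tau=\gamma_i\mu_i^2\beta/2$, and a direct manipulation shows that, whenever $\tau<1$, the constraint $\mu_i\beta/(1-\tau)<\eta=R_i$ is equivalent to
\[
\beta < \frac{2R_i}{\mu_i(2+\gamma_i R_i\mu_i)} = 2\chi_i .
\]
Moreover $\beta<2\chi_i$ already forces $\tau<1$, because $2\chi_i\le 2/(\gamma_i\mu_i^2)$ (this reduces to $0\le 2$). Since~(\ref{valueRho}) gives $|f_i(\bar\ebold_i)|<\chi_i<2\chi_i$, the interval $\bigl(|f_i(\bar\ebold_i)|,\,2\chi_i\bigr)$ is non-empty, and I would pick any $\beta$ in it. Then both Kantorovich inequalities hold and $\|f_i(\bar\ebold_i)\|_2<\beta$, so the theorem produces $\ebold_i^*\in\Omega$ with $f_i(\ebold_i^*)=0$; using the standard distance estimate $\|\ebold_i^*-\bar\ebold_i\|_2\le \mu_i\beta/(1-\tau)$ for the limit of the NFA iterates (the very quantity the hypothesis $\mu_i\beta/(1-\tau)<\eta$ controls) yields $\|\ebold_i^*-\bar\ebold_i\|_2<R_i$.

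It then remains to assemble the global conclusion. Setting $\ebold^*=(\ebold_1^*,\dots,\ebold_s^*)$, each point $p_i(\ebold_i^*)$ lies on $\Z(f)$, so $f(\X(\ebold^*))=0$. Admissibility follows from $\|\ebold_i^*\|_\infty\le\|\bar\ebold_i\|_\infty+\|\ebold_i^*-\bar\ebold_i\|_\infty<\|\bar\ebold_i\|_\infty+R_i<\|\bar\ebold_i\|_\infty+r_i=\varepsilon$, using $\|\cdot\|_\infty\le\|\cdot\|_2$; hence $\X(\ebold^*)$ is an admissible perturbation of $\Xeps$. The same norm inequality gives $\|\ebold^*-\bar\ebold\|_\infty=\max_i\|\ebold_i^*-\bar\ebold_i\|_\infty\le\max_i\|\ebold_i^*-\bar\ebold_i\|_2<\max_i R_i$, as required. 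The main obstacle is the bookkeeping of the third paragraph: one must check that the specific threshold $\chi_i$ in~(\ref{valueRho}) is tuned---up to the factor-of-two slack---to satisfy the two Kantorovich constraints $\tau<1$ and $\mu_i\beta/(1-\tau)<\eta$ \emph{simultaneously}, which is where the choices of $\Omega$, $\eta$ and $p$ must be made consistently.
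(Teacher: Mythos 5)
Your proof is correct and takes essentially the same route as the paper's: apply Kantorovich pointwise to each $f_i$ with $p=1$, $x_0=\bar\ebold_i$, $\Omega=B(\bar\ebold_i,R_i)$ and $\eta=R_i$ (so $\Omega_\eta=\{\bar\ebold_i\}$), using $R_i<\|J_{f_i}(\bar\ebold_i)\|_2/\gamma_i$ and the Lipschitz reverse triangle inequality to keep $J_{f_i}$ of full rank $1$, then concatenate the blockwise zeros and verify admissibility via $\|\cdot\|_\infty\le\|\cdot\|_2$. The only cosmetic difference is that the paper sets $\beta=\chi_i$ and checks $\gamma_i\mu_i^2\chi_i/2<1$ and $2\mu_i\chi_i/(2-\gamma_i\mu_i^2\chi_i)<R_i$ directly, whereas you identify the sharp threshold $2\chi_i$ and choose $\beta\in\bigl(|f_i(\bar\ebold_i)|,\,2\chi_i\bigr)$.
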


 \begin{proof}
Let ${\bar \alpha_j}=\alpha_j(\bar \ebold)$ and $f= t - \sum_{t_j \in \QB} {\bar \alpha_j} t_j$.
We observe that $f_i(\ebold_i)= t(p_i(\ebold_i)) +
\sum_{t_j \in \QB} {\bar \alpha_j} t_j(p_i(\ebold_i))$ is a polynomial function of $\ebold_i$ and its Jacobian~$J_{f_i} (\ebold_i) \in \Mat_{1\times n}(\R) $  is a Lipschitz function in $\bar B(\bar \ebold_i,r_i)$.
We prove that $\|J_{f_i}(\ebold_i)\|_2 >0$ in~$\bar B(\bar \ebold_i, R_i)$.
For each $\ebold_i \in \bar B(\bar \ebold_i,R_i)$ we~have
$$
\left | \; \|J_{f_i}(\ebold_i)\|_2 - \|J_{f_i}(\bar \ebold_i)\|_2  \; \right |  \le 
\|J_{f_i}(\ebold_i) - J_{f_i}(\bar \ebold_i)\|_2 \le 
\gamma_i \|\ebold_i - \bar \ebold_i\|_2 \le  \gamma_i R_i
$$
and so $\|J_{f_i}(\ebold_i)\|_2 \ge \|J_{f_i}(\bar \ebold_i)\|_2 - \gamma_i R_i > 0$.
It follows that $J_{f_i}(\ebold_i)$ has constant rank $1$
in~$\bar B(\bar \ebold_i, R_i)$; further,  $\|J^\dagger_{f_i} (\ebold_i)\|_2$ is upper bounded in $\bar B(\bar \ebold_i,R_i)$  since 
 $$
 \|J^\dagger_{f_i}(\ebold_i)\|_2 = \frac{\|J^t_{f_i}(\ebold_i)\|_2}{\|J_{f_i}(\ebold_i)\|_2^2} =
 \frac{1}{\|J_{f_i}(\ebold_i) \|_2} \le \frac{1}{\min_{\bar B(\bar \ebold_i,R_i)} \|J_{f_i}(\ebold_i)\|_2} 
  $$
 and $\min_{\bar B(\bar \ebold_i,R_i)} \|J_{f_i}(\ebold_i)\|_2>0$.
 
Now we apply Kantorovich theorem to $f_i$ on $B(\bar \ebold_i,R_i)$ with
$x_0=\bar \ebold_i$, $p=1$, $\eta=R_i$ and so $\Omega_\eta=\{\bar \ebold_i\}$; since
$\chi_i=\frac{ R_i}{\mu_i(2+\gamma_i R_i\mu_i)}$ satisfies
$\frac{\gamma_i \mu_i^2 \chi_i}{2}<1$ and  
$\frac{2\mu_i \chi_i}{(2-\gamma_i \mu_i^2 \chi_i)} < R_i$,
and~(\ref{valueRho}) holds we conclude there exists ${\ebold^*_i=(e^*_{i1} \dots e^*_{in})}$  in $B(\bar \ebold_i,R_i)$ such that $\bar f_i({\ebold^*_i})=0$.  
Let $\ebold^*=(e^*_{11}\dots e^*_{s1}, \dots, e^*_{1n}\dots e^*_{sn} )$; since $f(\X(\ebold^*))=(f_1(\ebold^*_1), \ldots,f_s(\ebold^*_s))^t$ we have $f(\X(\ebold^*))=0$. Furthermore, since the $i$-th element of  $\X$ is $p_i=p_i(0)$ and 
$$\|p_i(\ebold^*_i) -p_i(0)\|_\infty = \|\ebold^*_i \|_\infty  \le  \|\ebold^*_i -\bar \ebold_i\|_2+ \|\bar \ebold_i \|_\infty \le  R_i +\|\bar \ebold_i \|_\infty < \varepsilon$$
we have that $\X(\ebold^*)$ is an admissible perturbation of $\Xeps$. Finally, we have that
$\|\ebold^* - \bar \ebold\|_\infty = \max_i \|\ebold^*_i - \bar \ebold_i\|_\infty \le \max_i \|\ebold^*_i - \bar \ebold_i\|_2 < \max_i R_i < \varepsilon$ which concludes the proof.
\end{proof}

\section{Numerical examples}\label{examples}
In this section we present some numerical examples to show the effectiveness of the~LPA.
We implemented the LPA and the RFA using the C++ language, the CoCoALib \cite{Co}, and some routines
of GSL - GNU Scientific Library \cite{GSL}; all computations have been performed on an
Intel Core 2 Duo processor (at 1.86 GHz).
In all the examples the degree lexicographic term ordering with $y<x$ is understood; furthermose the coordinates of the points and the coefficients of the polynomials are sometimes displayed as truncated decimals. 

In Example~\ref{primo} the LPA is applied to the set of points of Example~\ref{mainExample} created by perturbing by less than $0.1$ the coordinates of $10$ points located on the parabola $g=0$ where
$g=y^2-x-2y+2$.
In contrast to the exact approach that provides a minimal degree polynomial of degree $4$, the LPA detects the geometrical configuration given by a parabola. 

\begin{example}\label{primo}
Let $\X \subset \R^2$ be the set of points given in Example~\ref{mainExample},
let $\varepsilon=0.1$, $\delta=2 \varepsilon$ and $k=2$. 
The LPA applied to~$\X^\varepsilon$ performs three iterations.
At the first two iterations the terms $t=y$ and $t=x$ are analyzed: since in both cases condition~(\ref{condition}) is satisfied the set $\QB=\{1,y,x\}$ is constructed. During the third iteration the term $t=y^2$ is considered: at step~{\bf LP4} the RFA applied to the current residual function~$\rho$ returns $\bar \ebold \in Q_\varepsilon \cap D_\QB$, therefore at step~{\bf LP5} a polynomial $f$ is computed (see Figure~\ref{Fig11}), and the algorithm stops with the following output:
\begin{equation*}
\begin{array}{rcl}
 \bar \ebold  &=& 10^{-1} ( 0.474,  -0.211,  -0.146,  -0.100,   0.075,  -0.105,-0.091, -0.017,
 \\ &&   0.066,   0.054,  0.004, -0.215, 0.285, -0.202, 0.211, 0.378, -0.332,\\ 
 &&     -0.065,  -0.369,   0.305)\\
f &=& y^2 -0.9751012065x -2.0049270587y + 1.9775224038\\
\QB &=& \{1,y,x\}
\end{array}
\end{equation*}

\begin{figure}[htb]
\centering
\begin{minipage}[c]{0.46\textwidth}
\includegraphics[width=\textwidth]{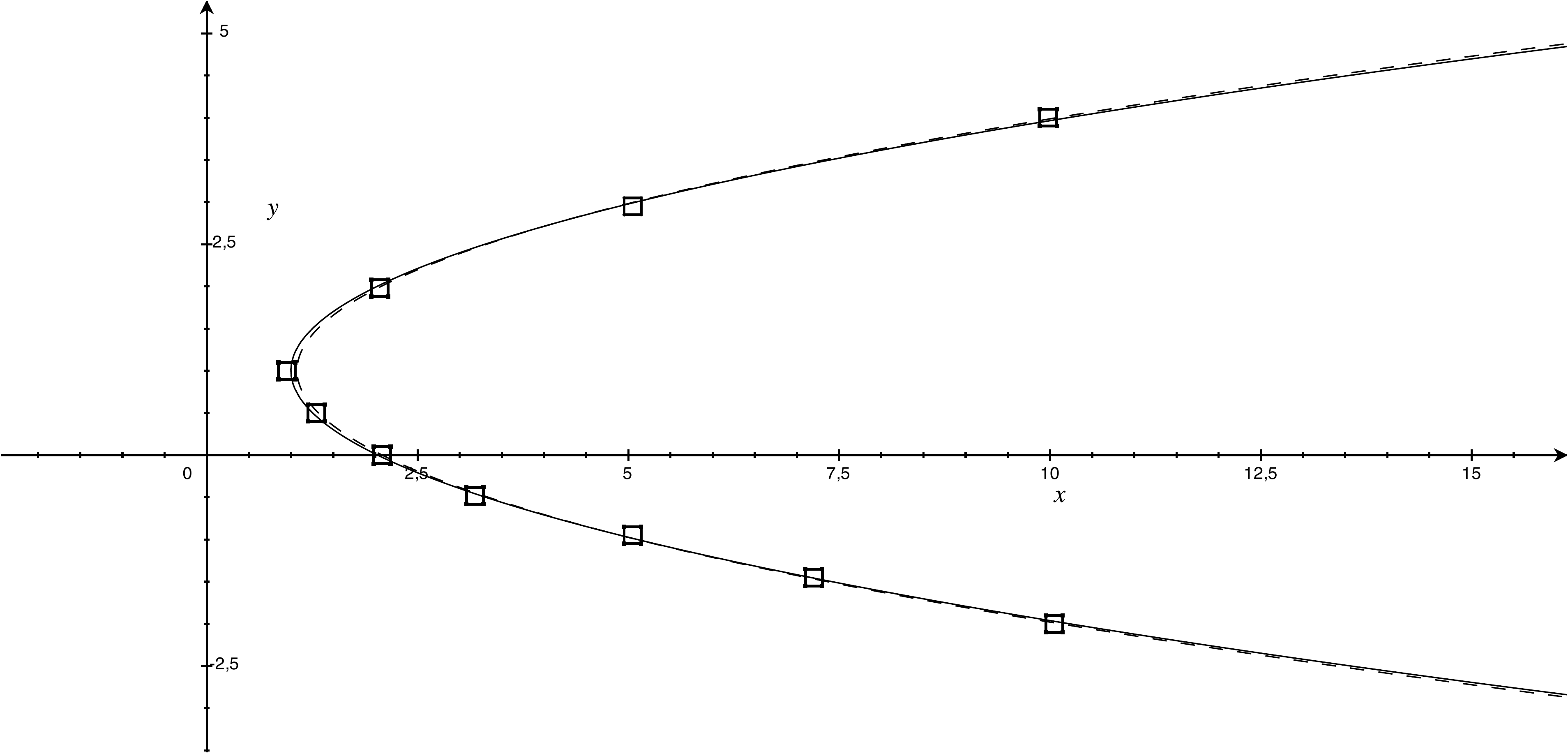}
\vspace{-.5cm}
\caption{Variety $\Z(f)$}\label{Fig11}
\end{minipage}%
\hspace{0.8cm}
\begin{minipage}[c]{0.46\textwidth}
\includegraphics[width=\textwidth]{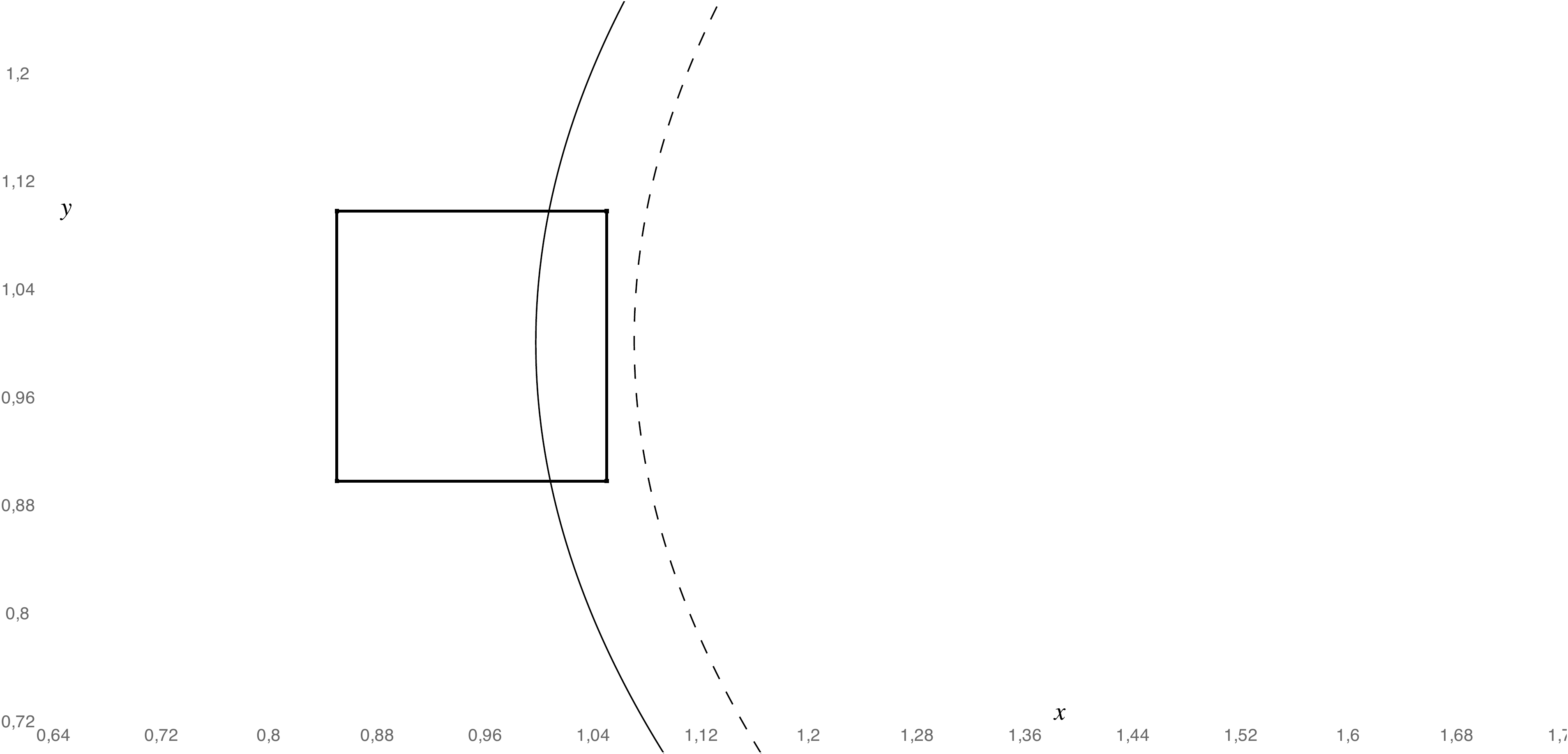}
\vspace{-.5cm}
\caption{The varieties $\Z(f)$~and $\Z(h)$: zoom~in~around the point (0.95, 1)}\label{Fig12}
\end{minipage}
\end{figure}

The set $\X(\bar \ebold) $ is an admissible perturbation
of~$\Xeps$; we can consider the polynomial $f$ almost
vanishing at~$\X(\bar \ebold)$ since  $\|f(\X(\bar \ebold))\|_2 /\|f\| < 2 \cdot 10^{-5} $.
We apply Theorem~\ref{theoremBasedOnKantorovich} to this example and report the values of $R_i$, $\chi_i$
and $|f_i(\bar \ebold_i)|$ in the following table. Since for each $i$ the inequality (\ref{valueRho}) is satisfied, we conclude that there exists $\ebold^*$
such that $\X(\ebold^*)$ is an admissible perturbation of~$\Xeps$, $f(\X(\ebold^*))=0$
and $\|\ebold^* - \bar\ebold\|_\infty < 0.0935$. 
\begin{table}[h]
\begin{center}
  \begin{tabular}{| c|c|c|c|c|c| }
    \hline
           & i=1 & i=2& i=3 & i=4 & i=5 \\ \hline
    $R_i$    &  0.0526 &   0.0785 &   0.0715  &  0.0798  &  0.0789   \\  \hline
    $\chi_i$ &  0.0243 &   0.0354 &   0.0325  &  0.0360  &  0.0356   \\  \hline
    $|f_i(\bar \ebold_i)|$    & $3.3 \cdot 10^{-7}$ & $3.87\cdot 10^{-6}$  &  $3.46\cdot 10^{-6}$  &  $4.77 \cdot 10^{-6}$ &
  $ 1.46\cdot 10^{-6} $\\ \hline \hline
           & i=6 & i=7& i=8 & i=9 & i=10 \\ \hline
    $R_i$    & 0.0622 &   0.0668   & 0.0935  &  0.0631  &  0.0695 \\ \hline
    $\chi_i$ & 0.0285 &   0.0305   & 0.0416  &  0.0289  &  0.0316 \\ \hline
    $|f_i(\bar \ebold_i)|$    &   $  9.23\cdot 10^{-6} $ & $ 4.45\cdot 10^{-6} $ & $ 1.015 \cdot 10^{-5}$  & $2.783\cdot 10^{-5}$  &$ 4.95\cdot 10^{-6}$  \\ \hline
  \end{tabular}
  \end{center}
  \end{table}
 
The NBM algorithm as well as the SOI algorithm (available in~\cite{Co}) applied to~$\Xeps$ detect the same support of~$f$, since they compute the polynomial $h=y^2-1.0041x-2.0089y+2.1287$ which almost vanishes at~$\X$.  Nevertheless~$h$ is not a solution to our problem: in fact, $h=0$ (the dashed line in Figure~\ref{Fig12}) contains no admissible perturbation of $\Xeps$ since it does not cross the $\varepsilon$-neighborhood of the first point of~$\X$.
\end{example}

In the following example a fairly large set of points of~$\R^5$ is considered.
\begin{example}
Let $\X \subset \R^5$ be a set of points created by perturbing by less than~$0.1$ the coordinates
of $25$ points lying on the affine variety $\Z(g)$ where
\begin{equation*}
\begin{array}{lll}
g&=&x_4^2 -\frac{17}{41}x_4x_5-2x_5^2 -\frac{10}{41}x_1 +\frac{21}{41}x_2-\frac{74}{41}x_3+ \frac{93}{41}x_4 -\frac{36}{41}x_5+ \frac{39}{41}
\end{array}
\end{equation*}
Let $\varepsilon=0.1$, $\delta=2 \varepsilon$ and ${k=2}$; the LPA applied to~$\X^\varepsilon$ 
performs $11$ iterations in a computational time of~$24$ seconds.
During the $11$th iteration the term $t=x_4^2$ is considered and the output $\bar \ebold$, $f$, $\QB$ is returned, where $\|\bar \ebold\|_\infty = 0.0789$
\begin{equation*}
\begin{array}{lll}
f &=& x_4^2  -0.421x_4x_5-1.994x_5^2-0.244x_1+ 0.405x_2 -1.779x_3 +2.328x_4\\ 
&&  -0.685x_5 + 1.124
+10^{-2} \cdot (0.056 x_1x_5 -2.549x_2x_5 + 1.679  x_3x_5)\\
\QB &=& \{1, x_5, x_4, x_3, x_2, x_1, x_5^2, x_4x_5, x_3x_5, x_2x_5, x_1x_5\}
\end{array}
\end{equation*}
The set $\X(\bar \ebold) $ is an admissible perturbation
of~$\Xeps$; we can 
consider the polynomial $f$ almost vanishing at~$\X(\bar \ebold)$
since  $\|f(\X(\bar \ebold))\|_2 /\|f\| \approx 3.42 \cdot 10^{-4} $.
We apply Theorem~\ref{theoremBasedOnKantorovich} to this example; 
since for each $i$ the inequality (\ref{valueRho}) is satisfied, we conclude that there exists $\ebold^*$
such that $\X(\ebold^*)$ is an admissible perturbation of~$\Xeps$, $f(\X(\ebold^*))=0$
and $\|\ebold^* - \bar\ebold\|_\infty < 0.097$. 

We observe that the minimum of the degrees of the polynomials of~$\I(\X)$ is $3$, and this
does not suggest that the points of $\X$ lie close to the variety~$\Z(g)$.
On the contrary, since the coefficients of $f$ differ only slightly from the corresponding coefficients of~$g$, 
the LPA allows us to recover the ``approximate" geometrical configuration of $\X$.
\end{example}

The following example (taken from~\cite{CGKW00}) is different from the standard cases deriving from 
the analysis of real-world phenomena. 
It points out that the LPA has various possibilities of applications; in this case it is employed to obtain 
the numerical implicitization of a parametric curve.


\begin{example}\label{secondo}
We consider the parametric equations for a B\'ezier curve:
\begin{equation}\label{paramEq}
\begin{array}{l}
x(t)= 4t(2t^5 -3t^4 + 8t^2 + 6t +3)/(t^6-3t^5 + 3t^4+3t^2 +3t+1) \\
y(t) = 6t(4t^4+9t^3 -9t^2 - 9t +5)/(t^6-3t^5 + 3t^4+3t^2 +3t+1)
\end{array}
\end{equation}
whose implicit equation is $g=0$ where
\begin{equation*}
\begin{array}{lll}
g &=& x^3 - \frac{2}{1269}x^2y - \frac{28}{423}xy^2 + \frac{224}{34263}y^3 - \frac{15712}{1269}x^2 -\frac{56}{1269}xy + \frac{848}{3807}y^2 \\
&& + \frac{44480}{1269}x - \frac{17792}{1269}y 
\end{array}
\end{equation*}
We consider the set of points
\begin{eqnarray*}
\X &=& \{ (0,  0),  (-1.3581,   -4.7661), (2.0956,    2.0315), (4.6884,   -0.3349),\\
&& (-2.7205,  -11.6848), (-7.2835,  -40.9773),  (6.7793,   -1.4114),\\
&& (8.6024,    1.4575),(10.5250,    8.8937), (12.6213,   19.7217) \}
\end{eqnarray*}
created by evaluating the parametric equations~(\ref{paramEq}) at~$10$ random values of the parameter $t \in (-1,2)$,
and rounding off up to $10^{-4}$.

Let $\varepsilon=10^{-4}$, $\delta=2 \varepsilon$ and ${k=2}$; the LPA applied to~$\X^\varepsilon$ performs $9$ iterations.
During the first $8$ iterations 
condition~(\ref{condition}) is satisfied, and the set ${\QB=\{1,y, x, y^2, xy, x^2, y^3, xy^2, x^2y\}}$ is constructed.
During the $9$-th iteration the term $t=x^3$ is considered: at step~{\bf LP4} the RFA applied to the current residual function~$\rho$ returns $\bar \ebold_1 \in Q_\varepsilon \cap D_{\QB}$, therefore at step~{\bf LP5} a
polynomial~$f_1$ is computed, and the algorithm stops with the output:
\begin{equation*}
\begin{array}{rcl}
 \bar \ebold_1  &=& 10^{-6} \cdot (8.13,  -1.99,  0.92,  -4.90,  1.40,  -7.63,  -3.30,   23.79,-24.90,  \\
 &&   8.50, -3.25,  0.36,-0.13,  1.11, 4.73,  -7.19,  9.02,  -8.41, 5.32,  -1.54)\\
f_1 &=& x^3 -0.00159x^2y - 0.06618xy^2 +0.00654y^3 -12.3814x^2 -0.04396xy\\
&&  +0.22269y^2 +35.0514x -14.0206y -0.00033\\
\QB &=& \{1,y, x, y^2, xy, x^2, y^3, xy^2, x^2y\}
\end{array}
\end{equation*}

Note that the coefficients of $f_1$ and $g$ are very close to each other; 
nevertheless, an even better result can be obtained by taking into account the exact information that the B\'ezier curve $g=0$
vanishes at the origin. 
To this aim we impose that the constant term of the polynomial computed by the LPA is zero;
under this assumption and using the above values for the parameters $\varepsilon$, $\delta$, $k$, the LPA applied to~$\X^\varepsilon$
stops with $\bar \ebold_2$, $f_2$, $\QB$, where
\begin{equation*}
\begin{array}{rcl}
 \bar \ebold_2  &=& 10^{-6} \cdot (0,  -2.09,  0.96,  -5.13, 1.46, -8.00, -3.45,24.92, -26.09, 8.90, \\
&& 0, 0.37,-0.14, 1.16,4.95,  -7.53,  9.44,  -8.81,  5.57,  -1.61)\\ 
 f_2 &=& x^3  -0.00158x^2y -0.06619xy^2 +0.00653y^3 -12.3814x^2 -0.04407xy \\ 
 &&+0.22271y^2 +35.0513x -14.0205y
\end{array}
\end{equation*}
We observe that $f_2$ is better than $f_1$ because its coefficients provide an even closer approximation of the coefficients of $g$, and its support and the support of~$g$ are the same.
\end{example}

%

\section{Appendix}\label{appendix}
The following proposition illustrates how to compute each column of the Jacobian matrix $J_\rho(\ebold)$, that is the vector $\partial \rho (\ebold)/\partial e_{ki}$, without passing through the explicit expression of the function~$\rho$.

\begin{proposition}\label{Calcolo_Jac}
Let $\QB = \{ t_1 \ldots t_m\}$ be a finite set of terms, let $t$ be a term, $t \notin \QB$, and  let $\rho$ be given by~(\ref{rho}); then
\begin{eqnarray*}
\frac{\partial \rho(\ebold)}{\partial e_{ki}}=
\left [ I - M_\QB(\ebold)M_\QB^\dagger(\ebold)\right] 
\left [\frac{\partial t(\ebold)}{\partial e_{ki}} -
\frac{\partial M_\QB(\ebold)}{\partial e_{ki}} \alpha(\ebold) \right ] - 
M_\QB^\dagger(\ebold)^t \frac{\partial M_\QB^t(\ebold) }{\partial e_{ki}}\rho(\ebold)
\end{eqnarray*}
where $I \in \Mat_s(\R)$ is the identity matrix, $ M_\QB^\dagger(\ebold) =\left( M_\QB^t(\ebold)M_\QB(\ebold) \right)^{-1} M_\QB^t(\ebold)$, 
$\partial t(\ebold) / \partial e_{ki}$ is the zero vector except the $k$-th coordinate which is equal to $\partial_i t(p_k(\ebold_k))$ and $\partial M_\QB(\ebold) / \partial e_{ki} $ is the zero matrix except the  $k$-th row which is equal to  $\left(\partial_i t_1 (p_k(\ebold_k)) \dots \partial_i t_m (p_k(\ebold_k))\right)$.
\end{proposition}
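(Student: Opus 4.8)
The plan is to reduce the statement to a single careful differentiation of the least-squares coefficient vector $\alpha(\ebold)=M_\QB^\dagger(\ebold)\,t(\ebold)$. Throughout I would work at a fixed $\ebold\in D_\QB$, where by definition $M_\QB(\ebold)$ has full column rank; hence $M_\QB^t M_\QB$ is invertible and the pseudoinverse equals $M_\QB^\dagger=(M_\QB^t M_\QB)^{-1}M_\QB^t$, exactly as recorded in the statement. For brevity I abbreviate $M=M_\QB(\ebold)$, $S=M^t M$, and write $(\cdot)'$ for $\partial(\cdot)/\partial e_{ki}$.

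First I would verify the two auxiliary expressions for $t'$ and $M'$. Each entry of the vector $t(\ebold)$ and of the matrix $M_\QB(\ebold)$ is the evaluation of a single term at one of the perturbed points $p_h(\ebold_h)$. Since only $p_k(\ebold_k)$ depends on $e_{ki}$, and only through its $i$-th coordinate, the chain rule gives $\partial\big[t_j(p_k(\ebold_k))\big]/\partial e_{ki}=\partial_i t_j(p_k(\ebold_k))$ and annihilates every row indexed by $h\neq k$. This produces precisely the sparse vector $\partial t(\ebold)/\partial e_{ki}$ (supported on the $k$-th coordinate) and the sparse matrix $\partial M_\QB(\ebold)/\partial e_{ki}$ (supported on the $k$-th row) described in the conclusion.

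The core is then to differentiate the normal equations. Since $\alpha$ is characterized by $S\alpha=M^t t$, differentiating yields $(M'^t M+M^t M')\alpha+S\alpha'=M'^t t+M^t t'$. Solving for $\alpha'$ and regrouping the right-hand side as $M'^t(t-M\alpha)+M^t(t'-M'\alpha)$ — recognizing the first group as $M'^t\rho$ — gives $\alpha'=S^{-1}M'^t\rho+M^\dagger(t'-M'\alpha)$, where I use $S^{-1}M^t=M^\dagger$. Substituting into $\rho'=t'-M'\alpha-M\alpha'$ and using $MS^{-1}=M^{\dagger t}$ (legitimate because $S$ is symmetric) collapses the expression to $(I-MM^\dagger)(t'-M'\alpha)-M^{\dagger t}M'^t\rho$, which is the claimed identity once $M$, $t'$, $M'$, $M'^t$ are restored to $M_\QB(\ebold)$, $\partial t(\ebold)/\partial e_{ki}$, $\partial M_\QB(\ebold)/\partial e_{ki}$, $\partial M_\QB^t(\ebold)/\partial e_{ki}$.

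I expect the only genuine obstacle to be bookkeeping: tracking transposes through the symmetric Gram matrix $S$ and spotting the two groupings — the one producing $\rho$ and the one producing $t'-M'\alpha$ — so that the surviving terms recombine into the single projector $I-M_\QB M_\QB^\dagger$ applied to $t'-M'\alpha$. An alternative route differentiates the residual projector $P=I-M_\QB M_\QB^\dagger$ directly via $\partial S^{-1}=-S^{-1}(\partial S)S^{-1}$, leading to the standard identity $\partial P=-PM'M_\QB^\dagger-(PM'M_\QB^\dagger)^t$ and then $\rho'=(\partial P)t+P\,t'$; this gives the same result but is more error-prone in its signs, so I would use the normal-equation derivation as the primary argument and keep the projector computation only as a cross-check.
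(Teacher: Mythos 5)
Your proposal is correct and follows essentially the same route as the paper: differentiate $\rho(\ebold)=t(\ebold)-M_\QB(\ebold)\alpha(\ebold)$ componentwise, differentiate the normal equations $M_\QB^t(\ebold)M_\QB(\ebold)\alpha(\ebold)=M_\QB^t(\ebold)t(\ebold)$ to solve for $\partial\alpha(\ebold)/\partial e_{ki}$ with the same regrouping that isolates $\rho(\ebold)$, substitute back, and verify the sparse form of $\partial t(\ebold)/\partial e_{ki}$ and $\partial M_\QB(\ebold)/\partial e_{ki}$ via the chain rule on a generic term. The only cosmetic difference is that you make the identities $M^\dagger=(M^tM)^{-1}M^t$ and $M(M^tM)^{-1}=M^{\dagger t}$ explicit, which the paper uses implicitly.
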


\begin{proof}
From the equality $\rho(\ebold)=t(\ebold)-M_\QB(\ebold) \alpha(\ebold)$, deriving matrices and vectors componentwise w.r.t.~$ e_{ki}$, we obtain
\begin{equation} \label{der_ro}
\frac{\partial \rho(\ebold)}{\partial e_{ki}}=
\frac{\partial t(\ebold)}{\partial e_{ki}}-
\frac{\partial M_\QB(\ebold)}{\partial e_{ki}}
\alpha(\ebold) - M_\QB(\ebold) \frac{\partial
\alpha(\ebold)}{\partial e_{ki}}
\end{equation}
Deriving componentwise each side of $M_\QB^t(\ebold)  M_\QB(\ebold)  \alpha(\ebold)  =  M_\QB^t(\ebold) t(\ebold) $ w.r.t.~$ e_{ki}$ we have
\begin{equation*}
\frac{\partial \alpha(\ebold)}{\partial e_{ki}} =  \left( M_\QB^t(\ebold)M_\QB(\ebold) \right)^{-1}
\left[ \frac{\partial M_\QB^t(\ebold)}{\partial e_{ki}} \rho(\ebold) + M_\QB^t(\ebold) \left (\frac{\partial t(\ebold)}{\partial e_{ki}}
- \frac{\partial M_\QB(\ebold)}{\partial e_{ki}}  \alpha(\ebold) \right) \right]
\end{equation*}
and the thesis follows  by substituting the expressions of $\partial \alpha(\ebold)/\partial e_{ki}$~in~(\ref{der_ro}).

For describing  $\frac{\partial t(\ebold)}{\partial e_{ki}} $  and  $  \frac{\partial M_\QB(\ebold)}{\partial e_{ki}} $ 
we consider the derivative of a generic term
$q=\Pi_{i=1}^n  x_i^{\beta_i}$. Since
$ q(\ebold) = \left ( \Pi_{i=1}^n(p_{1i} + e_{1i})^{\beta_i}, \dots,\Pi_{i=1}^n(p_{si} + e_{si})^{\beta_i}  \right )^t$
the vector $\frac{\partial q(\ebold)}{\partial e_{ki}} $ is the zero vector except its $k$-th coordinate which is equal to $\partial_i q(p_k(\ebold_k))$. It follows that only the $k$-th coordinate of the vector $\frac{\partial t(\ebold)}{\partial e_{ki}} $  and the $k$-th row of the matrix $  \frac{\partial M_\QB(\ebold)}{\partial e_{ki}} $ can be different to zero: they are equal to $\partial_i t(p_k(\ebold_k))$ and  to  $\left (\partial_i t_1 (p_k(\ebold_k)) \dots \partial_i t_m (p_k(\ebold_k))\right)$ respectively.
\end{proof}

\section*{Acknowledgments}
The authors would like to thank Prof.~D.~Bini and Prof.~L.~Robbiano for their
constructive remarks and helpful suggestions, and Dr.~S. Pasquero for his useful comments about the manuscript.
%

\bibliographystyle{model1-num-names}
\bibliography{<your-bib-database>}

\begin{thebibliography}{00}
\bibitem{AFT07}
J.~Abbott, C.~Fassino, and M.~Torrente,
\textit{Thinning Out Redundant Empirical Data.}
Math. Comput. Sci. 1 (2007), no. 2, 375--392. 
%
\bibitem{AFT08}
J.~Abbott, C.~Fassino, and M.~Torrente,
\textit{Stable Border Bases for Ideals of Points.}
J. Symbolic Comput. 43 (2008), no. 12, 883--894.
%
\bibitem{BQ1}
C.~H. Bischof and G.~Quintana-Ort\`i,
\textit{Computing Rank-Revealing QR Factorizations of Dense Matrices.}
ACM Trans. Math. Software 24 (1998), no. 2,  226--253.
%
\bibitem{BQ2}
C.~H.~Bischof and G.~Quintana-Ort\`i,
\textit{Algorithm 782: Codes for Rank-Revealing QR Factorizations of Dense Matrices.}
ACM Trans. Math. Software 24 (1998), no. 2,  254--257.
%
\bibitem{BM82}
B.~Buchberger and  H.~M.~M\"oller,
\textit{The construction of multivariate polynomials with preassigned zeros.}
{Proc. EUROCAM '82, LNCS}, 144 (1982),  24--31.
%
\bibitem{Co}
CoCoA Team,
{CoCoA: a system for doing computations in Commutative Algebra.}
Available at \verb|http://cocoa.dima.unige.it/|
%
\bibitem{CGKW00}
R.~M.~Corless, M.~W.~Giesbrecht, I.~S.~Kotsireas,  and S.~M.~Watt,
\textit{Numerical implicitization of parametric hypersurfaces with linear algebra.}
Proc. Artificial Intelligence with Symbolic Computation, (AISC 2000),
Lecture Notes in Artificial Intelligence, 1930 (2000), Springer Verlag, 174--183.
%
\bibitem{Datta10}
B.~N. Datta,
\textit{Numerical Linear Algebra and Applications}. SIAM, Philadelphia, 2010.
%
\bibitem{DH93}
J.~W. Demmel and  N.~J. Higham,
\textit{Improved Error Bounds for Underdetermined System Solvers.}
 SIAM J. Matrix Anal. Appl. 14 (1993), no. 1, 1--14.
%
\bibitem{Fa10}
C.~Fassino,
\textit{Vanishing Ideal of Limited Precision Points.}
J. Symbolic Comput. 45 (2010), no. 1, 19--37.
%
\bibitem{GVL96}
G.~H. Golub and  C.~F. Van Loan,
\textit{Matrix Computations. Third edition.} Johns Hopkins Studies in the Mathematical Sciences. Johns Hopkins University Press, Baltimore, MD, 1996.
%
\bibitem{GSL}
GSL(release 1.15)- GNU Scientific Library.
Available at \verb|http://www.gnu.org/software/gsl/|
%
\bibitem{HKPP09}
 D. Heldt, M. Kreuzer, S. Pokutta, and  H. Poulisse,
\textit{Approximate computation of zero-dimensional polynomial ideals.}
J. Symbolic Comput. 44 (2009), no. 11, 1566--1591.
%
\bibitem{HP92}
Y.~P. Hong and C.~T. Pan,
\textit{Rank-Revealing QR Factorizations and the Singular Value Decomposition.}
Math. Comp. 58 (1992), no. 197, 213--232.
%
\bibitem{KP11}
M. Kreuzer and  H. Poulisse,
\textit{Subideal border bases.} 
Math. Comp. 80 (2011), no. 274, 1135--1154.
%
\bibitem{KR00}
M. Kreuzer, L. Robbiano,
\textit{Computational Commutative Algebra 1}. Springer-Verlag, Berlin, 2000.
%
\bibitem{KR05}
M. Kreuzer and L.  Robbiano,
\textit{Computational Commutative Algebra 2}. Springer-Verlag, Berlin, 2005.
%
\bibitem{M99}
B. Mourrain,
\textit{A New Criterion for Normal Form Algorithms.}
Proc. AAECC, LNCS 1719 (1999), 430--443.
%
\bibitem{RA}
L.~Robbiano and J.~Abbott (Eds.),
\textit{Approximate Commutative Algebra.}
Text \& Monographs in Symbolic Computation, Springer, 2010.
%
\bibitem{S07}
T.~Sauer, 
\textit{Approximate varieties, approximate ideals and dimension reduction.}
Numer. Algor. 45 (2007), 295--313.
%
\bibitem{St}
 H.~J. Stetter,
\textit{Numerical Polynomial Algebra}. SIAM, Philadelphia, PA, USA, 2004.
%
\bibitem{St06}
H.~J. Stetter, 
\textit{``Approximate Commutative Algebra" - an ill-chosen name for an important discipline.}
ACM Commun. Comput. Algebra 40 (2006), no. 3.
%
\bibitem{T}
M. Torrente,
\textit{Application of algebra in the oil industry.}
{ Ph.D. Thesis}, Scuola Normale Superiore, Pisa, 2009.
%
\bibitem{WW}
H.~F. Walker and  L.~T. Watson,
\textit{Least-change secant update methods for underdetermined systems.}
SIAM J. Numer. Anal. 27 (1990), no. 5, 1227--1262. 
%
\end{thebibliography}


\end{document}